\begin{document}

\title{The genus of complete 3-uniform hypergraphs}
\author{Yifan Jing}
\address{%
Department of Mathematics\\
Simon Fraser University\\
Burnaby, BC, Canada}
\email{yifanjing17@gmail.com}

\author{Bojan Mohar}
\address{%
Department of Mathematics\\
Simon Fraser University\\
Burnaby, BC, Canada}
\email{mohar@sfu.ca}

\thanks{B.M.~was supported in part by the NSERC Discovery Grant R611450 (Canada), by the Canada Research Chairs program, and by the Research Project J1-8130 of ARRS (Slovenia).}
\thanks{On leave from IMFM \& FMF, Department of Mathematics, University of Ljubljana.}%

\date{} 

\maketitle

\newtheorem{theorem}{Theorem}[section]
\newtheorem{lemma}[theorem]{Lemma}
\newtheorem{definition}[theorem]{Definition}
\newtheorem{proposition}[theorem]{Proposition}
\newtheorem{corollary}[theorem]{Corollary}
\newtheorem{example}[theorem]{Example}
\newtheorem{conjecture}[theorem]{Conjecture}
\def\ng{\widetilde{\mathsf{g}}}
\def\eg{\widehat{\mathsf{g}}}
\def\g{\mathsf{g}}
\def\K{K_n^3}
\def\E{\mathcal{E}}
\newcolumntype{L}[1]{>{\raggedright\arraybackslash}p{#1}}
\newcolumntype{C}[1]{>{\centering\arraybackslash}p{#1}}
\newcolumntype{R}[1]{>{\raggedleft\arraybackslash}p{#1}}

\newcommand{\Case}[2]{\noindent {\bf Case #1:} \emph{#2}}

\begin{abstract}
In 1968, Ringel and Youngs confirmed the last open case of the Heawood Conjecture by determining the genus of every complete graph $K_n$. In this paper, we investigate the minimum genus embeddings of the complete $3$-uniform hypergraphs $\K$.
Embeddings of a hypergraph $H$ are defined as the embeddings of its associated Levi graph $L_H$ with vertex set $V(H)\sqcup E(H)$, in which $v\in V(H)$ and $e\in E(H)$ are adjacent if and only if $v$ and $e$ are incident in $H$. We determine both the orientable and the non-orientable genus of $\K$ when $n$ is even. Moreover, it is shown that the number of non-isomorphic minimum genus embeddings of $\K$ is at least $2^{\frac{1}{4}n^2\log n(1-o(1))}$. The construction in the proof may be of independent interest as a design-type problem.
\end{abstract}

\section{Introduction}

For a simple graph $G$, let $\g(G)$ be the {\em genus} (sometimes we also use the term {\em orientable genus}) of $G$, that is, the minimum $h$ such that $G$ embeds into the orientable surface $\mathbb{S}_h$ of genus $h$, and let $\ng(G)$ be the {\em non-orientable genus} of $G$ which is the minimum $c$ such that $G$ embeds into the non-orientable surface $\mathbb{N}_c$ with crosscap number $c$. (When $G$ is planar, we define $\ng(G)=0$). If orientability of a surface is not a concern, we may consider the {\em Euler genus} of $G$, which is defined as $\eg(G) = \min\{2\g(G),\ng(G)\}$.

By a surface we mean a compact two-dimensional manifold without boundary. We say $G$ is \emph{2-cell embedded} in a surface if each face of $G$ is homeomorphic to an open disk. Youngs \cite{Y} showed that the problem of determining the orientable genus of a connected graph $G$ is the same as determining a $2$-cell embedding of $G$ with minimum genus. The same holds for the non-orientable genus \cite{PPPV}. It was proved by Thomassen \cite{NPC} that the genus problem is NP-complete. For further background on topological graph theory, we refer to \cite{MT}.

One of the basic questions in topological graph theory is to determine the genus of a graph. This task can be complicated even for small graphs and for families of graphs with simple structure. The genus problem for complete graphs became of central importance in connection with the four-colour problem and its generalization to other surfaces.  Heawood \cite{Heawood} generalized the four-colour conjecture to higher genus surfaces in 1890 and proposed what became known as the \emph{Heawood Map-coloring Conjecture}. The problem was open for almost eight decades, and in 1965 this problem was given the place of honor among Tietze's \emph{Famous Problems of Mathematics} \cite{T}. The problem was eventually reduced to the genus computation for complete graphs and was studied in a series of papers. In 1968, Ringel and Youngs \cite{completegraph} announced the final solution of Heawood's Conjecture. The complete proof was presented in the monograph \cite{mapcolor}. Their proof is split in $12$ cases, some of which were slightly simplified later, but for the most complicated cases, no short proofs are known as of today.

\begin{theorem}[Ringel and Youngs \cite{mapcolor}]
If $n\geq3$ then
\[
\g(K_n)=\bigg\lceil\frac{(n-3)(n-4)}{12}\bigg\rceil.
\]
If $n\geq5$ and $n\neq7$, then
\[
\ng(K_n)=\bigg\lceil\frac{(n-3)(n-4)}{6}\bigg\rceil.
\]
\end{theorem}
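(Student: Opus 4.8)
The plan is to prove matching lower and upper bounds. For the lower bound I would invoke Euler's formula together with Youngs's theorem: since $K_n$ is connected, a minimum-genus embedding is $2$-cell, so in a surface of Euler genus $g$ we have $n-\binom n2+F=2-g$, where $F$ is the number of faces. As $K_n$ has girth $3$, every face is bounded by at least three edges and $3F\le 2\binom n2=n(n-1)$; substituting $F\le n(n-1)/3$ gives $g\ge 2-n+\binom n2-n(n-1)/3=\tfrac{(n-3)(n-4)}{6}$. Since $g$ is an integer this yields $\eg(K_n)\ge\big\lceil\tfrac{(n-3)(n-4)}{6}\big\rceil$, hence $\ng(K_n)\ge\big\lceil\tfrac{(n-3)(n-4)}{6}\big\rceil$ and, using $2\g\ge\eg$, also $\g(K_n)\ge\big\lceil\tfrac{(n-3)(n-4)}{12}\big\rceil$. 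Equality in Euler's bound forces a triangular (or near-triangular) embedding, so the whole problem reduces to \emph{constructing} such embeddings.

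For the upper bound I would describe each embedding by a rotation system (plus a signature in the non-orientable case) on vertex set $\mathbb{Z}_n$, chosen so that the rotation at vertex $i$ is obtained from a single cyclic permutation of $\mathbb{Z}_n\setminus\{0\}$ by translation by $i$. The key tool is the method of \emph{current graphs}: a small graph $G$ equipped with a rotation system, edge orientations, and currents from an abelian group $\Gamma$ (here $\mathbb{Z}_n$ or a close relative) obeying Kirchhoff's current law at every vertex. Tracing the face boundaries of $G$ and recording the currents encountered produces the desired cyclic permutation; KCL at a trivalent vertex forces the corresponding faces of the \emph{derived} embedding to be triangles, and if the currents used form a transversal of the $\pm$-pairs of $\Gamma\setminus\{0\}$ the derived graph is exactly $K_n$. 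So it suffices to exhibit, for every residue, a current graph with the right group, the right current set, and the right number of face-tracing circuits (the ``index'') to make the derived embedding have minimum genus.

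Then I would split into cases according to $n\bmod 12$ (orientable) and $n\bmod 6$ (non-orientable), since $(n-3)(n-4)$ is divisible by $12$ exactly when $n\equiv 0,3,4,7\pmod{12}$ and by $6$ exactly when $n\equiv 0,1,3,4\pmod 6$. In those ``regular'' residues the target is a genuine triangulation, obtained from an index-$1$ (orientable) or index-$1$-with-signature (non-orientable) current graph; in the remaining residues there is a small Euler-characteristic deficiency, which I would absorb either by deleting a suitable vertex from a triangulation of a slightly larger complete graph, or by introducing ``vortices'' (vertices where KCL is relaxed) together with a bounded number of extra handles or crosscaps, or by using index-$2$ or index-$3$ current graphs with doubled edges and broken rotations. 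Several non-orientable bounds can also be bootstrapped from orientable triangulations by adding a single crosscap, but the tight value still needs dedicated constructions in most classes, and the isolated failure $\ng(K_7)=3$ (not $2$) has to be verified separately by showing directly that $K_7$ does not embed in the Klein bottle.

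The main obstacle is precisely the construction and verification of the current graphs across these twelve-plus cases: there is no single uniform recipe, and each residue class requires its own infinite family whose validity---that the current law holds, that every nonzero group element occurs exactly once up to sign, and that the number of circuits is as prescribed---must be checked essentially by hand, with the most delicate being the families for which an index-$1$ graph cannot be used and one must instead manage vortices or higher index while still controlling the genus exactly. This case-by-case combinatorial design, together with the ad hoc analysis of the small exceptional value $n=7$, is where essentially all the difficulty of the theorem lies.
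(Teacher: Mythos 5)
This statement is not proved in the paper at all: it is the Ringel--Youngs Map Color Theorem, quoted with a citation to the monograph \cite{mapcolor}, and the paper explicitly remarks that the full proof splits into $12$ cases for which no short arguments are known. So the only fair comparison is with the actual Ringel--Youngs proof, and against that standard your text is an outline, not a proof. Your lower-bound half is complete and correct: Euler's formula for a $2$-cell minimum-genus embedding plus the girth-$3$ bound $3F\le 2\binom n2$ gives $\eg(K_n)\ge\tfrac{(n-3)(n-4)}{6}$, hence the two stated ceilings via $\eg\le\ng$ and $\eg\le 2\g$. The residue computations you state ($12\mid(n-3)(n-4)$ iff $n\equiv0,3,4,7\pmod{12}$, and $6\mid(n-3)(n-4)$ iff $n\equiv0,1,3,4\pmod 6$) are also correct.

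The genuine gap is the entire upper bound. You name the right machinery (rotation systems generated by $\mathbb{Z}_n$-translation, current graphs satisfying Kirchhoff's law, index, vortices, handle/crosscap adjustments, deletion from larger triangulations), but you do not exhibit a single current graph, let alone the infinite families needed for each of the twelve orientable residue classes and the non-orientable classes, nor do you verify for any of them that the current law holds, that the currents form a transversal of $\pm(\Gamma\setminus\{0\})$, and that the number of face-tracing circuits is as required. You acknowledge this yourself, but acknowledging that ``this is where all the difficulty lies'' does not discharge it: those constructions \emph{are} the theorem, and they occupy essentially the whole of Ringel's monograph. Likewise the exceptional value $\ng(K_7)=3$ (and the irregular cases such as $n\equiv 2\pmod{12}$, where the Euler bound alone is not tight and an additional ad hoc argument is needed) are only flagged, not handled. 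As written, the proposal establishes the lower bounds and a plausible program for the upper bounds, which is far short of a proof of the stated equalities.
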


A natural generalization of genus problems for graphs is the genus of hypergraphs \cite{JSW,W}. The embeddings of a hypergraph $H$ are defined as the embeddings of its associated \emph{Levi graph}.
The genus problems of hypergraphs are tightly related with the genus of bipartite graphs, $2$-complexes, block designs and finite geometry. We refer to \cite{topics,R85,W} for more background.

In this paper, we determine the genus and the non-orientable genus of complete $3$-uniform hypergraphs $\K$ when $n$ is even.

Our main result is the following.

\begin{theorem}\label{thm:main}
If $n\geq4$ is even, then
\[
\g(\K)=\frac{(n-2)(n+3)(n-4)}{24}.
\]
If $n\geq6$ is even, then
\[
\ng(\K)=\frac{(n-2)(n+3)(n-4)}{12}.
\]
\end{theorem}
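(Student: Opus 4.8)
The strategy splits naturally into a lower bound (via Euler's formula applied to the Levi graph) and a matching upper bound (an explicit construction of an embedding of the required genus). Let me sketch both.

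For the lower bound, I would work with the Levi graph $L=L_{\K}$, which is bipartite with parts $V(\K)$ (of size $n$) and $E(\K)$ (of size $\binom{n}{3}$), and whose number of edges is $3\binom{n}{3}$, since each hyperedge is incident with $3$ vertices. Since $L$ is bipartite, every face of a $2$-cell embedding has length at least $4$, but in fact there is a stronger constraint: two distinct hyperedges of $\K$ share at most $2$ vertices, so $L$ has no $4$-cycles through two vertices of $V(\K)$ and two of $E(\K)$ of the "wrong" type — more precisely, a careful count shows that $6$-faces are the shortest possible faces that can appear in abundance, and one must bound the number of $4$-faces. I would set $f_4$ = number of quadrilateral faces and $f_{\geq 6}$ the rest; each $4$-face corresponds to a pair of hyperedges sharing two vertices, i.e.\ to a "cherry" (a pair of vertices together with two hyperedges containing both), and counting cherries gives $f_4 \le \binom{n}{2}$ (roughly), while $\sum (\text{face lengths}) = 2|E(L)| = 6\binom{n}{3}$. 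Plugging into Euler's formula $|V(L)| - |E(L)| + f = 2 - 2\g$ (orientable) or $2 - \ng$ (non-orientable), and optimizing the face-length distribution subject to these constraints, should yield exactly $\g(\K) \ge \frac{(n-2)(n+3)(n-4)}{24}$ when $n$ is even. The parity/divisibility miracle that makes the bound an integer and tight is where the evenness of $n$ enters.

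For the upper bound, I need to construct a $2$-cell embedding of $L_{\K}$ meeting the Euler-formula bound with equality; equivalently, a rotation system (plus signature, in the non-orientable case) in which the face-length distribution is extremal — that is, essentially all faces are hexagons (each hexagon using one pair of vertices and... ), with just enough $4$-faces to account for the cherries. This is where the "design-type" construction mentioned in the abstract comes in: I expect the embedding to be assembled from a symmetric scheme, likely a current graph or voltage graph construction in the spirit of Ringel–Youngs, where the rotation at each vertex of $V(\K)$ is governed by a cyclic structure on $\mathbb{Z}_{n-1}$ or $\mathbb{Z}_n$, and the rotations at the hyperedge-vertices are forced (each has degree $3$, so only two cyclic orders, and after fixing an orientation there is essentially no choice). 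The main work is to choose the vertex rotations so that tracing faces yields the correct hexagonal structure globally and no unexpected short or long faces appear; for $n$ even one can presumably use a near-$1$-factorization of $K_n$ or a round-robin tournament schedule to organize the hyperedges into the faces consistently. The non-orientable bound should follow either from a similar construction with a nonplanar signature, or by a standard "adding a crosscap" / diamond-sum argument reducing it to the orientable case, noting $\ng = 2\g$ exactly here (so the extremal embedding is already as efficient as possible and the non-orientable surface saves nothing asymptotically but matches the formula).

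The main obstacle is unquestionably the upper bound construction: producing an explicit rotation system for all even $n$ whose faces are (almost) all hexagons is a genuine combinatorial design problem, and verifying that face-tracing closes up correctly — with exactly the right small number of quadrilateral faces and nothing longer — is delicate and likely requires a case analysis on $n \bmod k$ for some small modulus $k$ (as in the classical Ringel–Youngs proof, which famously broke into twelve cases). I would expect to first handle a clean residue class to convey the idea, then treat the remaining classes by modifications. A secondary subtlety is pinning down the exact maximum number of $4$-faces in the lower bound: one must check that the cherry-counting bound is simultaneously achievable, i.e.\ that the extremal embedding uses every cherry as a quadrilateral face, which ties the two halves of the proof together and explains the precise constant $24$.
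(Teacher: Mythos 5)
Your proposal rests on a false structural premise, and this breaks both halves. A $4$-cycle in the Levi graph $L_n$ consists of two vertices $i,j$ and two hyperedges (triples) both containing $i$ and $j$; since for each pair $\{i,j\}$ there are $n-2$ triples through it, such configurations are abundant (about $\binom{n}{2}\binom{n-2}{2}$ four-cycles), not rare. Your ``cherry'' bound $f_4\lesssim\binom{n}{2}$ is therefore wrong, and the ensuing optimization (hexagons dominating) would yield a lower bound of order $n^3/6$ for the Euler genus, which is \emph{larger} than the value $\tfrac{1}{6}(n-2)(n+3)(n-4)\approx n^3/12$ asserted in the theorem --- so your lower-bound argument would contradict the statement you are trying to prove. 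The correct lower bound is much simpler: $L_n$ is bipartite, so every face has length at least $4$, and Euler's formula alone gives $\eg\ge\tfrac12\binom{n}{3}-n+2=\tfrac{1}{6}(n-2)(n+3)(n-4)$, with equality if and only if the embedding is quadrangular (every face a $4$-face). Consequently the upper bound must produce an embedding of $L_n$ in which \emph{all} $\tfrac32\binom{n}{3}$ faces are quadrilaterals, not a mostly-hexagonal one; aiming for hexagonal faces cannot meet the bound.

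The paper's construction is also quite different from the current-graph/Ringel--Youngs scheme you anticipate. It encodes a quadrangular embedding of $L_n$ by an ``embedding set'': for each $i\in[n]$ an Eulerian circuit $T_i$ of $K_n-i$ (read off from the rotation at vertex $i$), with a pairwise compatibility condition on transitions ($ajb$ in $T_i$ forces $aib$ or $bia$ in $T_j$; the \emph{strong} version, forcing $bia$, characterizes orientable embeddings). The upper bound is then an induction from $n$ to $n+2$: the circuits $T_i$ are extended by inserting explicit trails through the two new vertices, with base case $K_4^3$ (planar) for the orientable statement. Note also that your suggestion to deduce the non-orientable value from the orientable one (``$\ng=2\g$'' via a crosscap or diamond-sum argument) does not come for free: one needs a genuinely non-orientable quadrangular embedding, which the paper obtains by exhibiting a compatible-but-not-strongly-compatible embedding set for $K_6^3$ and running the same induction from that base (this is also why the non-orientable claim starts at $n\ge6$). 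If you want to salvage your plan, replace the face-distribution analysis by the girth-$4$ Euler bound and redirect the construction toward an all-quadrilateral embedding, e.g.\ via such systems of compatible Eulerian circuits.
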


In our proofs, we construct a set of Eulerian circuits satisfying certain compatibility conditions. The construction may be independent interest as a design type problem.

If $n$ is odd, the genus of $\K$ is strictly greater than $\big\lceil\tfrac{1}{24}(n-2)(n+3)(n-4)\big\rceil$. Surprisingly, it is actually much larger than that. These cases will be dealt with in a separate paper \cite{odd}.

The construction used to prove Theorem \ref{thm:main} has lots of flexibility and it can be generalized to give many non-isomorphic minimum genus embeddings.

\begin{theorem}
If $n$ is even, there exist at least $2^{\frac{1}{4}n^2\log n(1-o(1))}$ non-isomorphic (orientable and non-orientable, respectively) minimum genus embeddings of $\K$, where the logarithm is taken base $2$.
\end{theorem}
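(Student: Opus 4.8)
The plan is to manufacture a large family of quadrilateral embeddings of the Levi graph $L := L_{\K}$ by perturbing the embedding built in the proof of Theorem~\ref{thm:main}, and then to pass from ``many distinct embeddings'' to ``many isomorphism classes'' by observing that $|\mathrm{Aut}(L)|$ is negligible against the family size.

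First I would record the structural reduction behind Theorem~\ref{thm:main}. The graph $L$ is bipartite, with $n$ \emph{point-vertices} of degree $\binom{n-1}{2}$ and $\binom n3$ \emph{triple-vertices} of degree $3$, so every face has length at least $4$; comparing Euler's formula with the value in Theorem~\ref{thm:main} shows that a minimum Euler genus embedding of $L$ must have \emph{all} faces quadrilaterals. A quadrilateral face reads $v\,\text{--}\,\{v,w,x\}\,\text{--}\,w\,\text{--}\,\{v,w,z\}\,\text{--}\,v$ and is encoded by the pair $\{v,w\}$ together with the $2$-subset $\{x,z\}\subseteq[n]\setminus\{v,w\}$; consequently (as the proof of Theorem~\ref{thm:main} establishes) a quadrilateral embedding of $L$ is precisely a system of perfect matchings $M_{\{v,w\}}$ on the $(n-2)$-sets $[n]\setminus\{v,w\}$ (which forces $n$ even), together with, for each point $v$, an Eulerian circuit $C_v$ of the complete graph on $[n]\setminus\{v\}$ whose transition at every vertex $w$ is exactly $M_{\{v,w\}}$ — so that the transitions of $C_v$ at $w$ and of $C_w$ at $v$ induce the same matching for each pair; a signature/orientation then fixes the orientability type. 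For the orientable count we use that an orientable such embedding lies in $\mathbb{S}_{\g(\K)}$, and for the non-orientable count that a non-orientable one lies in $\mathbb{N}_{\ng(\K)}$ (both facts are part of Theorem~\ref{thm:main}).

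Next I would locate the flexibility. Starting from the base embedding $\Pi_0$ of Theorem~\ref{thm:main}, given by $\{M^0_{\{v,w\}}\}$ and $\{C^0_v\}$, the goal is to isolate about $n/2$ mutually independent ``slots'' in this data, each slot carrying a free choice of a perfect matching on about $n-2$ symbols (hence $(n-3)!!$ options), such that changing one slot forces only a local, self-contained repair of the remaining matchings and circuits — in particular every $C_v$ stays a \emph{single} Eulerian circuit (its transition system stays connected) and the local rotations and signatures still glue into the same surface with the same orientability. This is exactly the design-type assertion advertised after Theorem~\ref{thm:main}: the base construction must be arranged so that the $\Theta(n)$ repair regions never interfere. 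Granting this, the slots yield $\big((n-3)!!\big)^{\Theta(n)} = 2^{\frac14 n^2\log n\,(1-o(1))}$ distinct matching systems, each a genuine quadrilateral — hence minimum genus — embedding; carrying out the scheme from an orientable base $\Pi_0$, and separately from a non-orientable base embedding, produces the two required families.

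Finally, distinct matching systems have distinct face sets, so these embeddings are pairwise distinct as labelled embeddings. For $n$ large, $\mathrm{Aut}(L)\cong S_n$: the degrees recover the bipartition, any three points have a unique common triple-neighbour so an automorphism is determined by its action on the points, and every permutation of the points extends. Since an isomorphism of embeddings is in particular an automorphism of $L$, the number of isomorphism classes in our family is at least $2^{\frac14 n^2\log n\,(1-o(1))}/n! = 2^{\frac14 n^2\log n\,(1-o(1))}$, because $\log n! = O(n\log n) = o(n^2\log n)$. The only delicate step is the flexibility claim in the third paragraph: one must verify that the construction of Theorem~\ref{thm:main} genuinely admits $\Theta(n)$ independent free matchings without ever breaking the connectivity of some $C_v$ or the global surface, and that this freedom really amounts to $\approx n/2$ matchings — which is what fixes the constant in the exponent at $\tfrac14$.
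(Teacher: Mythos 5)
Your reduction to ``compatible Eulerian circuits / transition matchings'' and your endgame (divide the number of labelled embeddings by $|\mathrm{Aut}(L_n)|\le n!$, which is negligible since $\log n!=O(n\log n)$) both match the paper. But the core of your argument --- the claim that the base embedding can be arranged to contain roughly $n/2$ \emph{independent slots}, each carrying a free choice of a perfect matching on $\approx n-2$ symbols, such that any such choice can be absorbed by a purely local repair that keeps every $C_v$ a single Eulerian circuit and keeps all compatibilities --- is exactly the statement you do not prove, and you acknowledge as much in your last sentence. This is not a routine verification: replacing the transition matching of $C_v$ at a vertex $w$ by an arbitrary perfect matching generically splits $C_v$ into several closed trails, and the same change must simultaneously be mirrored in $C_w$ and be consistent with all the other circuits; nothing in the construction of Theorem~\ref{thm:main} is ``advertised'' to have this one-shot property, so as written the proposal assumes the theorem's essential content.

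The paper extracts the $2^{\frac14 n^2\log n(1-o(1))}$ freedom in a genuinely different way, namely by accumulating choices over the inductive construction from order $m-2$ to order $m$, for all even $m\le n$: at each step one may (i) choose, for each odd $i$, which of the $\frac{m-4}{2}$ transitions through $i+1$ in $T_i$ to break before inserting the explicit trail $E_i$, (ii) choose the odd--even pairing itself, i.e.\ an arbitrary perfect matching of $K_{m-2}$ (this is where a single factor $(m-3)!!$ enters per step, not $n/2$ factors $(n-3)!!$ at the top level), and (iii) swap the roles within each pair and swap $x$ with $y$. This gives the recursion $R_n\ge\frac12\big(\frac{n-4}{2}\big)^{\frac{n-2}{2}}(n-3)!!\,2^{\frac{n-2}{2}}R_{n-2}$, and summing $\Theta(m\log m)$ over the $\approx n/2$ steps yields the exponent $\frac14 n^2\log n$. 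Because each of these modifications only reroutes the explicitly constructed inserted trails, Eulerian-ness and (strong) compatibility are preserved by inspection --- precisely the verification your ``independent matching slots'' scheme leaves open. To fix your write-up you would either have to prove your flexibility claim (which looks harder than the theorem itself) or switch to this step-by-step accumulation of choices.
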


Inspired by \cite[Theorem 1.5]{JM}, we also study the genus of hypergraphs with multiple edges. In that result a phase transition occurs when studying the genus of random bipartite graphs with parts of size $n_1$ and $n_2$, where $n_2$ is constant, $n_1\gg 1$, and edge probability is $p = \Theta(n_1^{-1/3})$. The following hypergraph appears in the analysis related to that case.  Let $m\K$ be the $3$-uniform hypergraph such that each triple of vertices is contained in exactly $m$ edges. We have the following result.

\begin{theorem}\label{thm:multi}
Let $m$ be a positive integer and let $n\geq4$ be even. Then
\[
\g(m\K)=\frac{(n-2)(mn(n-1)-12)}{24}
\]
and
\[
\ng(m\K)=\frac{(n-2)(mn(n-1)-12)}{12}.
\]
\end{theorem}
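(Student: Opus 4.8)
The plan is to prove both formulas by matching a counting lower bound with a construction that bootstraps the minimum genus embeddings of $\K$ supplied by Theorem~\ref{thm:main}. For the lower bound, write $L=L_{m\K}$ for the Levi graph; it is bipartite with $|V(L)|=n+m\binom n3$ vertices and $|E(L)|=3m\binom n3$ edges, and for $n\ge 4$ every edge of $L$ lies on a $4$-cycle $a\,e\,b\,e'$, so $L$ has no bridges. Hence in any $2$-cell embedding of $L$ in a surface of Euler genus $g$ every facial walk has even length at least $4$, so the number of faces satisfies $|F|\le\tfrac12|E(L)|$, and Euler's formula $|V(L)|-|E(L)|+|F|=2-g$ gives
\[
g\;\ge\;2-|V(L)|+\tfrac12|E(L)|\;=\;\frac{(n-2)\big(mn(n-1)-12\big)}{12},
\]
after a short simplification. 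A routine divisibility check shows the right-hand side and its half are integers for even $n$, so $\eg(m\K)\ge\tfrac1{12}(n-2)(mn(n-1)-12)$, whence $\ng(m\K)\ge\tfrac1{12}(n-2)(mn(n-1)-12)$ and $\g(m\K)\ge\tfrac1{24}(n-2)(mn(n-1)-12)$.

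Equality forces a \emph{quadrangular} embedding, one in which every face is a $4$-cycle $a\,e\,b\,e'$ with $a,b\in V$ and $e\ne e'$ hyperedges both containing $\{a,b\}$; so it suffices to exhibit, for every even $n\ge4$ and every $m\ge1$, an orientable and a non-orientable quadrangular embedding of $L_{m\K}$. For $m=1$ these are exactly the optimal embeddings produced in the proof of Theorem~\ref{thm:main}. To go from $m$ to $m+1$ I would prove a \textbf{multiplication lemma}: from a quadrangular embedding of $L_{m\K}$ one can build a quadrangular embedding of $L_{(m+1)\K}$ by inserting the $\binom n3$ new degree-$3$ edge-vertices (one fresh parallel copy of each triple) through a local surgery that adds exactly $\tfrac14\binom n3$ handles — equivalently $\tfrac12\binom n3$ crosscaps when the starting embedding is non-orientable, since adding a handle to a non-orientable surface is the same as adding two crosscaps. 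Iterating from the base case yields quadrangular embeddings of $L_{m\K}$ of Euler genus $\eg(\K)+(m-1)\cdot\tfrac12\binom n3$, and one checks this equals the lower bound above; in the orientable case the genus increases by $\tfrac14\binom n3$ per step (an integer, as $4\mid\binom n3$ for even $n$) and in the non-orientable case the crosscap number increases by $\tfrac12\binom n3$ per step, matching $\g(m\K)$ and $\ng(m\K)$. Alternatively, and closer to the spirit of the paper, one can rerun the construction of Theorem~\ref{thm:main} with the auxiliary graph that carries the compatible Eulerian circuits replaced by its $m$-fold multigraph, so that the same circuit family is traversed with multiplicity $m$; quadrangularity is then read off from the circuit structure exactly as in the $m=1$ case.

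The hard part is the surgery itself (equivalently, re-verifying the compatibility conditions in the multigraph setting). One must design the gadget so that a batch of new edge-vertices costs only one handle per four of them while keeping \emph{every} face — old and new — a quadrilateral: the naive move of inserting $f_T$ into three faces, one at each pair of vertices of $T$, exhausts the three edges at $f_T$ but appears to force a handle per new vertex, so the real content is to route many of the third edges through a shared handle, which is exactly where the arithmetic coincidence $\tfrac14\binom n3\in\mathbb Z$ enters. A natural bookkeeping is to process the new triples in groups supported on four common vertices, mimicking the $n=4$ case where $K_4^3$ contributes $\binom43=4$ new edge-vertices and one handle, and then to check that gluing consecutive groups destroys no quadrilateral. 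Once the orientable version of the lemma is in place, the non-orientable statement requires nothing further: apply the same surgery starting from the non-orientable quadrangular embedding of $\K$ from Theorem~\ref{thm:main}; it stays non-orientable and realizes crosscap number exactly $2\g(m\K)$, giving the claimed value of $\ng(m\K)$.
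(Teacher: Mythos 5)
Your lower bound is fine and is exactly the paper's Lemma~\ref{lem:hypergenus} applied to $m\K$, but the upper bound --- which is the entire content of the theorem --- has a genuine gap: the ``multiplication lemma'' is never actually proved. You yourself identify the crux (``the real content is to route many of the third edges through a shared handle'') and then only gesture at ``processing the new triples in groups supported on four common vertices'' without constructing the gadget or verifying that all old and new faces remain quadrilaterals; as you note, the naive insertion of each new triple-vertex fails, so nothing in the proposal certifies that a quadrangular embedding of $L_{(m+1)\K}$ exists. The paper avoids surgery altogether: it extends Theorem~\ref{thm:embed} to the multigraph setting (compatibility counted with multiplicities of transitions) and performs the induction step on $m$ combinatorially, by taking an embedding set $\E_n^m=\{T_1^m,\dots,T_n^m\}$ of $m\K$ together with the embedding set $\E_n=\{T_1,\dots,T_n\}$ of $\K$ from Theorem~\ref{thm:ori}, choosing for each odd $i$ a transition $a_i,i+1,b_i$ common to $T_i^m$ and $T_i$, and splicing the whole circuit $T_i$ into $T_i^m$ at that transition (and $T_{i+1}$ into $T_{i+1}^m$ at $b_i,i,a_i$); compatibility of the enlarged circuits is then immediate to check. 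Your alternative sketch (``traverse the same circuit family with multiplicity $m$'') is closer in spirit, but as stated it is not a proof: you must formulate and justify the multigraph version of Theorem~\ref{thm:embed}, exhibit actual Eulerian circuits of $m(K_n-i)$, verify the multiplicity-compatibility, and --- for the non-orientable case --- argue that the resulting embedding set is not equivalent to a strong one.

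There is a second concrete gap in the non-orientable case $n=4$, $m\ge2$. Your plan is to ``apply the same surgery starting from the non-orientable quadrangular embedding of $\K$,'' but no such embedding exists for $n=4$: Theorem~\ref{thm:main} gives non-orientable minimum genus embeddings only for $n\ge6$, and indeed $L_4$ quadrangulates only the sphere, so there is nothing non-orientable to start the induction from, and an orientable start plus handle-additions can never become non-orientable. The paper resolves exactly this point by exhibiting an explicit embedding set $\E_4^2$ of $2K_4^3$ on the Klein bottle and using it as the base of the induction on $m$. Without (i) a proved induction step and (ii) this base case, the proposal does not establish either formula.
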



The paper is organized as follows. In the next section, we give basic definitions and results in topological graph theory. In addition, we present the main tools used in the proof of the main theorem. In Section 3, we prove Theorem \ref{thm:main}. In Section 4, we show that the number of non-isomorphic minimum genus embeddings constructed in the proof of Theorem \ref{thm:main} is abundant. Section 5 resolves the genus of hypergraphs with multiple edges and contains the proof of Theorem \ref{thm:multi}.

\section{Embeddings of complete 3-uniform hypergraphs}%

We will use standard graph theory definitions and notation as used by Diestel \cite{Diestel}. As previously mentioned, an embedding of a graph $G$ on a surface is a drawing of $G$ on that surface without edge-crossings. Every 2-cell embedding (and thus also any minimum genus embedding) of $G$ can be represented combinatorially by using the corresponding {\em rotation system} $\pi=\{\pi_v\mid v\in V(G)\}$ where a {\em local rotation} $\pi_v$ at the vertex $v$ is a cyclic permutation of the neighbours of $v$. In addition to this, we also add the {\em signature}, which is a mapping $\lambda: E(G)\to\{1,-1\}$ and describes if the local rotations around the endvertices of an edge have been chosen consistently or not. The signature is needed only in the case of non-orientable surfaces; in the orientable case, we may always assume the signature is trivial (all edges have positive signature). The pair $(\pi,\lambda)$ is called the \emph{embedding scheme} for $G$. For more background on topological graph theory, we refer to \cite{GT,MT}.

We say that two embeddings $\phi_1,\phi_2: G\to S$ of a graph $G$ into the same surface $S$ are \emph{equivalent} (or \emph{homeomorphic}) if there exists a homeomorphism $h: S\to S$ such that $\phi_2 = h\phi_1$. By \cite[Corollary 3.3.2]{MT}, (2-cell) embeddings are determined up to equivalence by their embedding scheme $(\pi,\lambda)$, and two such embedding schemes $(\pi,\lambda)$ and $(\pi',\lambda')$ determine equivalent embeddings if and only if they are \emph{switching equivalent}. This means that there is a vertex-set $U\subseteq V(G)$ such that $(\pi',\lambda')$ is obtained from $(\pi,\lambda)$ by replacing $\pi_u$ with $\pi^{-1}_u$ for each $u\in U$ and by replacing $\lambda(e)$ with $-\lambda(e)$ for each edge $e$ with one end in $U$ and the other end in $V(G)\setminus U$.

Let $H$ be a hypergraph. The associated {\em Levi graph} of $H$ is the bipartite graph $L_H$ defined on the vertex set $V(H)\cup E(H)$, in which $v\in V(H)$ and $e\in E(H)$ are adjacent if and only if $v$ and $e$ are incident in $H$; see \cite{C} or \cite{W}. In this paper, we use $K_n^{3}$ to denote the complete $3$-uniform hypergraph of order $n$, and we denote its Levi graph by $L_n$. The vertices of $L_n$ corresponding to $V(K_n^3)=[n]$ will be denoted by $X_n$ and the vertex-set of cardinality $\binom{n}{3}$ corresponding to the edges of $K_n^3$ will be denoted by $Y_n$. Following \cite[Chapter 13]{W}, we define embeddings of a hypergraph $H$ in surfaces as the 2-cell embeddings of its Levi graph $L_H$. We define the {\em genus} $\g(H)$ (the {\em non-orientable genus} $\ng(H)$, and the {\em Euler genus} $\eg(H)$) as the genus (non-orientable genus, and Euler genus, respectively) of $L_H$.

It is easy to see that each embedding of a hypergraph $H$ can be represented by choosing a point in a surface for each vertex of $H$ and a closed disk $D_e$ for each edge $e\in E(H)$ such that for any two edges $e,f$, the intersection of the corresponding disks $D_e\cap D_f$ is precisely the set of points in $e\cap f$. We refer to \cite{W} for more details and to Figure~\ref{fig:K4} for an example.

\begin{figure}[htb]
\centering
\begin{minipage}[t]{0.45\textwidth}
\centering
\includegraphics[width=2in]{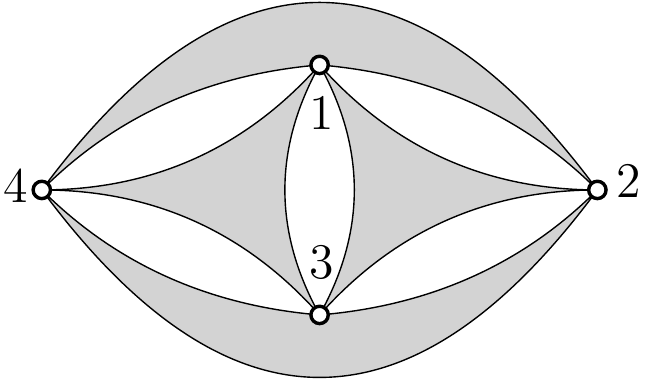}
\end{minipage}\hfill\begin{minipage}[t]{0.55\textwidth}
\centering
\includegraphics[width=1.8in]{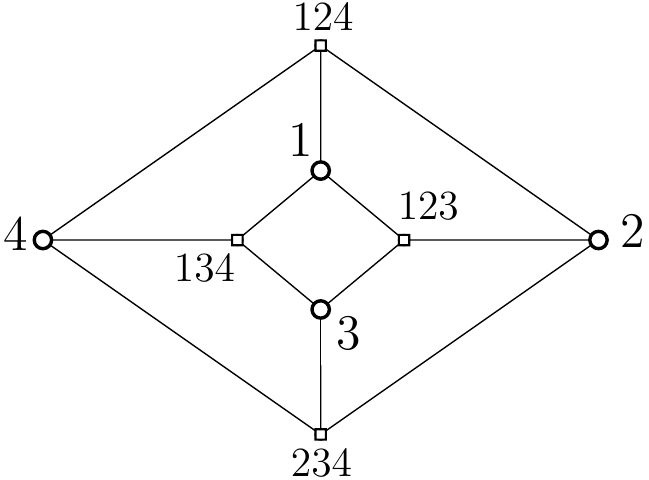}
\end{minipage}
\caption{Planar embeddings of $K_4^3$ and its Levi graph $L_4$.}\label{fig:K4}
\end{figure}

Since $L_H$ is bipartite, we have the following simple corollary of Euler's Formula (see \cite[Proposition 4.4.4]{MT}).

\begin{lemma}
\label{lem:hypergenus}
Let $H$ be a $3$-uniform hypergraph with $n$ vertices and $e$ edges. Then
\begin{equation}\label{*}
\eg(H)\geq\tfrac{1}{2}e-n+2.
\end{equation}
Moreover, equality holds in {\rm(\ref{*})} if and only if the Levi graph $L_H$ admits a quadrilateral embedding in some surface.
\end{lemma}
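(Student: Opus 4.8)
The plan is to obtain (\ref{*}) as essentially a one-line consequence of Euler's formula applied to a minimum Euler genus $2$-cell embedding of the Levi graph $L_H$, and then to extract the equality case from the amount of slack in that computation. Two structural features of $L_H$ carry the argument: since $H$ is $3$-uniform, every vertex of $L_H$ lying in the part $E(H)$ has degree exactly $3$, so $L_H$ has $n+e$ vertices and $3e$ edges; and since $L_H$ is bipartite and simple, its girth is at least $4$, so in any $2$-cell embedding every face boundary is a closed walk of length at least $4$.

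First I would fix a $2$-cell embedding of $L_H$ realizing $\eg(H)$. Here I assume $L_H$ is connected — the only case needed in this paper; for disconnected $H$ one reduces to this since Euler genus is additive over connected components — and connectivity is precisely what allows one to invoke Youngs \cite{Y} in the orientable case and \cite{PPPV} in the non-orientable case to know that a minimum genus embedding exists and is $2$-cell. Writing $f$ for its number of faces, Euler's formula $|V(L_H)| - |E(L_H)| + f = 2 - \eg(H)$ gives $f = 2 - \eg(H) - (n+e) + 3e = 2 - \eg(H) - n + 2e$.

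Next I would bound $f$ from above by counting face--edge incidences: since each of the $3e$ edges lies on exactly two face boundaries and each face boundary has length at least $4$, we get $4f \le 2\cdot 3e = 6e$, that is, $f \le \tfrac{3}{2}e$. Combining this with the formula for $f$ and rearranging yields $\eg(H) \ge \tfrac{1}{2}e - n + 2$, which is (\ref{*}).

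For the equality statement I would observe that both inequalities used above are tight simultaneously precisely when every face has length exactly $4$. In detail: if $\eg(H) = \tfrac{1}{2}e - n + 2$, then the chosen embedding has exactly $f = \tfrac{3}{2}e$ faces, their boundary lengths sum to $6e$, and each is at least $4$, so every face is a quadrilateral and $L_H$ has a quadrilateral embedding. Conversely, a quadrilateral embedding of $L_H$ in a surface $S$ has $f = \tfrac{6e}{4} = \tfrac{3}{2}e$ faces, and Euler's formula forces $\eg(S) = |E(L_H)| - |V(L_H)| + 2 - f = 3e - (n+e) + 2 - \tfrac{3}{2}e = \tfrac{1}{2}e - n + 2$, which equals $\eg(H)$ by (\ref{*}). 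I do not anticipate a genuine obstacle: the whole thing is bookkeeping with Euler's formula, and the only points that deserve an explicit word rather than a computation are the reduction to connected $L_H$ and the use of the fact that a minimum genus embedding of a connected graph is $2$-cell.
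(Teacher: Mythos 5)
Your argument is correct and is exactly the standard computation the paper has in mind: the paper does not spell out a proof but simply cites Euler's formula together with the bipartite (girth at least $4$) face-length bound, which is precisely your derivation, including the equality analysis via all faces having length exactly $4$. Your explicit remarks about connectivity and the $2$-cell property of minimum genus embeddings are fine and consistent with the paper's references to Youngs and to \cite{PPPV}.
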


In the case of the complete $3$-uniform hypergraphs we obtain:

\begin{proposition}\label{prop:euler}
For every $n\geq4$ we have $\eg(\K)\geq\Big\lceil\frac{(n-2)(n+3)(n-4)}{12}\Big\rceil$.
\end{proposition}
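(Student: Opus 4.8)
The plan is to derive the bound purely by edge-counting, i.e.\ directly from Lemma~\ref{lem:hypergenus}, followed by a short algebraic simplification. First I would observe that $\K$ has $n$ vertices and exactly $e=\binom{n}{3}$ edges, so Lemma~\ref{lem:hypergenus} applies verbatim and yields
\[
\eg(\K)\ \geq\ \tfrac{1}{2}\binom{n}{3}-n+2 .
\]

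Next I would rewrite the right-hand side in the claimed closed form. Substituting $\binom{n}{3}=\tfrac{1}{6}n(n-1)(n-2)$ and clearing denominators,
\[
\tfrac{1}{2}\binom{n}{3}-n+2=\frac{n(n-1)(n-2)-12(n-2)}{12}=\frac{(n-2)\bigl(n^{2}-n-12\bigr)}{12}=\frac{(n-2)(n-4)(n+3)}{12}.
\]
Since $\eg(\K)$ is a nonnegative integer, the bound may be rounded up, giving $\eg(\K)\geq\bigl\lceil\tfrac{1}{12}(n-2)(n-4)(n+3)\bigr\rceil$, which is exactly the asserted inequality. A sanity check at $n=4$ (where the bound is $0$, consistent with the planar embedding of $L_4$ in Figure~\ref{fig:K4}) and at $n=6$ (bound $6$) confirms the arithmetic.

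There is essentially no obstacle in this proposition itself: it is a one-line consequence of Euler's formula together with the bipartiteness of $L_n$. The only point worth flagging is that $\tfrac12\binom{n}{3}$ need not be an integer for odd $n$, so the ceiling in the statement is genuinely needed there; for even $n$ the expression $\tfrac{1}{12}(n-2)(n-4)(n+3)$ is already an integer, which is what permits the clean closed forms in Theorem~\ref{thm:main}. The real work lies in the reverse direction: by the equality clause of Lemma~\ref{lem:hypergenus}, meeting this bound for even $n$ requires exhibiting a quadrilateral embedding of $L_n$, and constructing (and later counting) such embeddings is what occupies the subsequent sections.
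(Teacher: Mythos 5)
Your proof is correct and is exactly the paper's (implicit) argument: Proposition~\ref{prop:euler} is obtained by substituting $e=\binom{n}{3}$ into Lemma~\ref{lem:hypergenus} and simplifying $\tfrac12\binom{n}{3}-n+2=\tfrac{1}{12}(n-2)(n-4)(n+3)$, then rounding up. No gaps; your closing remarks about integrality and the role of the equality case match the paper's subsequent development.
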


From now on, we assume $n\geq4$ is an even integer. For each $i$ ($1\leq i\leq n$), let $K_n-i$ be the labelled complete graph defined on the vertex set $[n]\setminus \{i\}$. Suppose $T_i$ and $T_i^\prime$ are Eulerian circuits in $K_n-i$. If $T_i^\prime$ is the reverse of $T_i$, we denote it by $T_i^{-1}$ and view them to be equivalent. Two families ${\mathcal F},{\mathcal F'}$ of circuits are \emph{equivalent} if there is a bijection $f:{\mathcal F} \to {\mathcal F'}$ such that for each $C\in {\mathcal F}$ either $f(C)=C$ or $f(C)=C^{-1}$.

Suppose $T_i$ is an Eulerian circuit in $K_n-i$ and $T_j$ in $K_n-j$, where $j\neq i$. Define a {\em transition} through $j$ in $T_i$ as a subtrail of $T_i$ consisting of two consecutive edges $aj$ and $jb$, and we denote it simply by $ajb$ (which may sometimes be written as $a,j,b$). We say that $T_i$ and $T_j$ are {\em compatible} if for every transition $ajb$ in $T_i$, there is a transition $aib$ or $bia$ in $T_j$, and $T_i$ and $T_j$ are {\em strongly compatible} if for every transition $ajb$ in $T_i$, there is the transition $bia$ in $T_j$. Note that this gives a bijective correspondence between $\frac{n-2}{2}$ transitions through $j$ in $T_i$ and $\frac{n-2}{2}$ transitions through $i$ in $T_j$. We call a set of trails $\{T_1,\dots,T_n\}$ an {\em embedding set} if $T_i$ is an Eulerian circuit in $K_n-i$ for each $i=1,\dots,n$ and any two of them are compatible. An embedding set is {\em strong} if for every $i\neq j$, $T_i$ and $T_j$ are strongly compatible. In our construction of embeddings, we will use different rules when specifying Eulerian circuits for odd and even values of $i$, and we will say that $i\in[n]$ is an {\em odd vertex} (or {\em even vertex}) when $i$ is odd (or even) viewed as an integer.

The following result is our main tool in this paper.

\begin{theorem}\label{thm:embed}
Let $n\geq4$ be an even integer. There exists a bijection between equivalence classes of the (labelled) quadrilateral embeddings of the Levi graph $L_n$ of $\K$ and the equivalence classes of embedding sets of size $n$. Under this correspondence, strong embedding sets correspond to orientable quadrilateral embeddings.
\end{theorem}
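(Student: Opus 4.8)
The plan is to set up the correspondence directly on the level of rotation systems. Fix a quadrilateral embedding of $L_n$. Every face is a $4$-cycle $x - e - x' - e'$ where $x, x' \in X_n$ are two of the three vertices of the hyperedge $e$, and similarly for $e'$; so each face records, for the vertex $e \in Y_n$ of degree $3$, a ``transition'' between two of its three neighbours. Since $\deg_{L_n}(e) = 3$, the rotation $\pi_e$ is a cyclic permutation of the three vertices of $e$, and the three faces incident with $e$ are exactly the three transitions determined by $\pi_e$. Dually, consider a vertex $i \in X_n$, of degree $\binom{n-1}{2}$ in $L_n$. I would show that the faces incident with $i$, read off from the rotation $\pi_i$ together with the signatures on edges at $i$, trace out a closed walk that visits each neighbour $e$ of $i$ exactly twice (once for each of the two faces at $e$ that contain $i$). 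Contracting each vertex $e \in Y_n$ incident with $i$ to the edge of $K_n - i$ joining the other two vertices of $e$, this closed walk becomes a closed walk in $K_n - i$ using every edge exactly once, i.e.\ an Eulerian circuit $T_i$ in $K_n - i$. This defines the map from embeddings to candidate embedding sets.

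The key structural point is that compatibility is forced. Take $i \neq j$ and a hyperedge $e = \{a, i, j\}$ (or more precisely $e$ containing both $i$ and $j$, with third vertex determined only when $n$ allows; for a transition we only need two named vertices, so write $e \supseteq \{i,j\}$ and let the third vertex of $e$ be $a$). The rotation $\pi_e$ is a $3$-cycle on $\{a, i, j\}$, hence exactly one of the two faces at $e$ ``through $j$'' pairs $i$ with the third neighbour in the cyclic order; unwinding the definitions, the transition $a\,j\,b$ appearing in $T_i$ (where $b$ is the third vertex of the hyperedge $\{i, j\}\cup\{b\}$ realizing that face at $i$) corresponds to precisely the face at $e$ that, read from $j$'s side, yields a transition $a\,i\,b$ or $b\,i\,a$ in $T_j$. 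So $T_i$ and $T_j$ are compatible, and each $T_i$ is Eulerian; thus $\{T_1, \dots, T_n\}$ is an embedding set. Conversely, given an embedding set, I would reconstruct $\pi_e$ for each $e \in Y_n$ (any $3$-cycle on its vertices — but the two choices are related by reversal, and the compatibility condition pins down which faces glue up) and $\pi_i$ for each $i \in X_n$ from $T_i$ by expanding each length-$2$ subtrail $a\,\text{(edge)}\,b$ of $K_n - i$ back to a pair of arcs through the corresponding $e \in Y_n$; a counting/Euler-characteristic check (each hyperedge gives $3$ faces, total face count $\tfrac12 \cdot 3\binom n3$, matching the quadrilateral bound of Proposition~\ref{prop:euler} via Lemma~\ref{lem:hypergenus}) verifies that all faces are quadrilaterals. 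One then checks the two constructions are mutually inverse up to switching equivalence, which on the combinatorial side is exactly the ``reverse a circuit'' move $T_i \leftrightarrow T_i^{-1}$ together with reorienting faces — hence the statement is about equivalence classes on both sides.

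For the orientability refinement: an orientable embedding is one admitting an all-positive signature (up to switching), equivalently the faces can be coherently oriented so that each edge is traversed once in each direction. Walking around a face $x - e - x' - e'$ coherently, the transition it induces at $e$ (say $x \to e \to x'$) and the one it induces at the neighbouring face through the same vertex must be ``opposite''; tracking this around vertex $i$ one sees that coherent orientability forces: whenever the face through $j$ at $i$ reads as the ordered transition $a, j, b$ in the oriented $T_i$, the matching face at $e$ reads as $b, i, a$ in the oriented $T_j$ — i.e.\ exactly strong compatibility. Conversely a strong embedding set lets one orient every face consistently, so the signature is trivializable and the surface is orientable. I expect the main obstacle to be the bookkeeping in this last step: carefully fixing orientation conventions for faces and edges so that ``the face at $e$ read from $j$'s side is the reverse $b\,i\,a$'' is literally the negation of the local orientation data at $i$, and confirming that these local constraints are globally consistent exactly when switching-equivalence to the trivial signature holds. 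The combinatorial core (faces at $e$ $\leftrightarrow$ rotation of a degree-$3$ vertex; faces at $i$ $\leftrightarrow$ Eulerian circuit) is clean; it is the interface with signatures and switching equivalence that needs the most care.
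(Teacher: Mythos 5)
Your overall architecture is the same as the paper's: read an Eulerian circuit $T_i$ of $K_n-i$ off the rotation around each $i\in X_n$, get compatibility from the three faces at each degree-$3$ vertex of $Y_n$, observe that switching equivalence corresponds to the reversal moves $T_i\leftrightarrow T_i^{-1}$, reconstruct an embedding scheme from an embedding set for surjectivity, and tie strong compatibility to orientability. The forward direction and the equivalence bookkeeping are fine in outline.

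There is, however, a genuine gap in the surjectivity step. You propose to verify that the reconstructed embedding is quadrilateral by a counting/Euler-characteristic argument (``each hyperedge gives $3$ faces, total face count $\tfrac12\cdot 3\binom{n}{3}$, matching the bound of Lemma~\ref{lem:hypergenus}''). This is circular: the number of faces of the embedding determined by your scheme $(\pi,\lambda)$ is not known in advance --- it is exactly what face tracing produces --- and Lemma~\ref{lem:hypergenus} together with Proposition~\ref{prop:euler} only bounds the Euler genus from below, i.e.\ the number of faces from above; equality there is precisely what you must establish, so it cannot be used as input. Nor can you simply assert that each hyperedge vertex lies on three distinct faces: a priori a single face could pass through a degree-$3$ vertex of $Y_n$ more than once, so its three corners need not belong to three quadrilateral faces. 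The way to close this (and what the paper does) is a local computation: set $\pi_{ijk}=(i,j,k)$, define the signature of the edge from $ijk$ to $i$ by the direction in which $T_i$ traverses $jk$ (similarly for $j$ and $k$), normalize by replacing some circuits with their inverses so that $T_i,T_j,T_k$ traverse $jk$, $ki$, $ij$ coherently, and then trace the three faces at $ijk$ explicitly, invoking compatibility to pass from the transition $ajk$ in $T_i$ to $kia$ in $T_j$ and then to $bijc$ in $T_k$, so that each face closes after four edges. Since $L_n$ is bipartite, every face meets $Y_n$, so this local check does give that all faces are quadrilaterals; moreover this is exactly the place where the explicit signature convention shows that strong compatibility corresponds to a trivializable signature, so it cannot be deferred as mere bookkeeping --- without it both the quadrilateral claim and the orientability refinement remain unproved.
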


\begin{proof}
Suppose $\Pi=\{\pi_v\mid v\in V(L_n)\}$ is a quadrilateral embedding of $L_n$. Recall that $X_n=[n]$ and $Y_n=\binom{[n]}{3}$ is the bipartition of $L_n$. For every vertex $i\in X_n$, consider the local rotation $\pi_i$ around $i$. Note that the neighbors of $i$ are all $\binom{n-1}{2} =: N$ triples of elements of $[n]$ which contain $i$, and all of them have degree $3$ in $L_n$.

Each pair of consecutive vertices (triples) in $\pi_i$ determines a 4-face with two vertices in $X_n$, say $i$ and $j$. Then both triples are adjacent to $i$ and to $j$ in $L_n$, so they both contain $i$ and $j$. Let us now consider the two 4-faces containing the edge joining $i$ and a triple $ijk$. Since this triple is adjacent to vertices $j$ and $k$ in $L_n$, one of the neighbors of $i$ preceding or succeeding $ijk$ in the local rotation $\pi_i$ contains $j$ and the other one contains $k$. Therefore there is a sequence $a_1,a_2,\dots,a_N$ such that $a_j$ is the common element between the $j$th and $(j+1)$st neighbor of $i$ in $\pi_i$. Moreover, the $j$th neighbor of $i$ is the triple $ia_{j-1}a_j$ (where $a_0=a_N$). Clearly, the cyclic sequence $T_i = (a_1a_2\dots a_N)$ is an Eulerian circuit in $K_n-i$ since the consecutive pairs $a_{j-1}a_j$ ($1\le j\le N$) run over all pairs in $[n]\setminus \{i\}$.

\begin{figure}[htb]
\centering
\includegraphics[width=11cm]{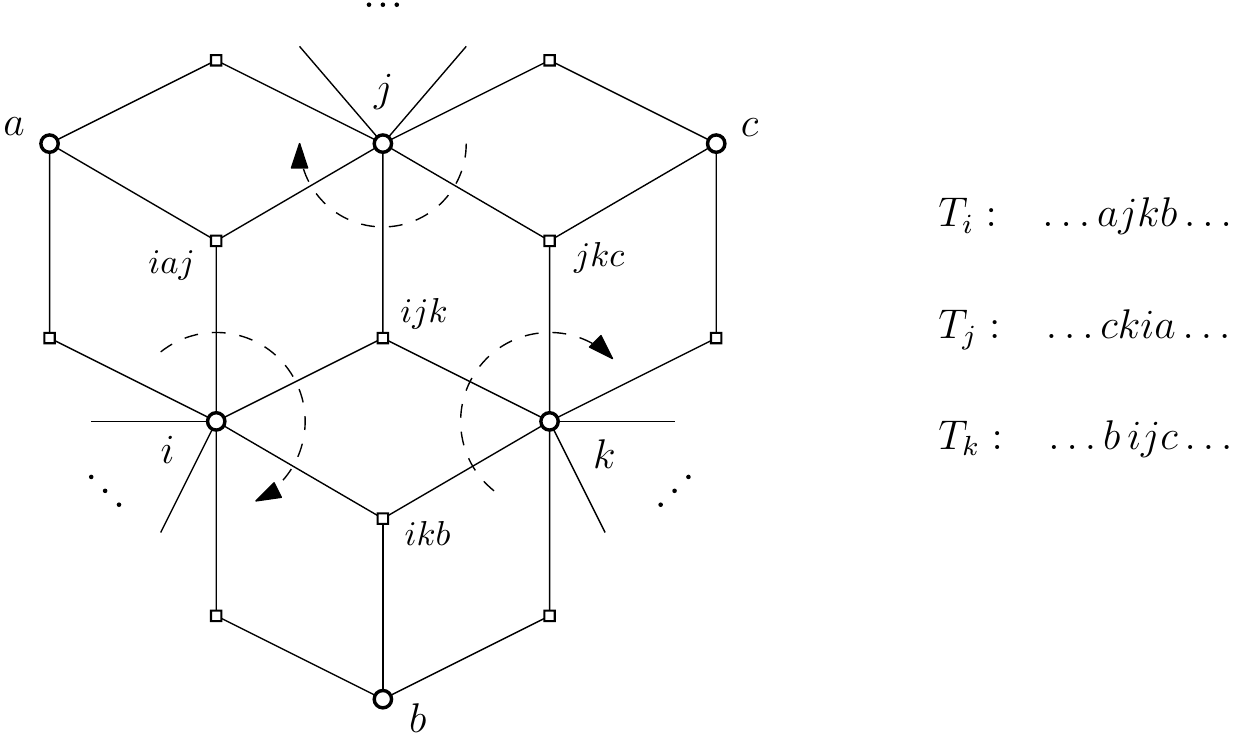}
\caption{A quadrilateral embedding around a vertex $ijk\in Y_n$. The chosen clockwise rotation around vertices $i,j,k$ is indicated by the dashed circular arcs.}\label{fig:locally around ijk}
\end{figure}

Suppose $iaj$ and $ijk$ are consecutive neighbors of the vertex $i$ in $\pi_i$ and assume $iaj\to ijk$ is clockwise. See Figure \ref{fig:locally around ijk} for clarification. That means, $ajk$ is a transition in $T_i$. Now consider the local rotation $\pi_j$. Clearly, $iaj$ and $ijk$ are consecutive vertices in $\pi_j$. Moreover, assuming the local rotations around $i$ and $j$ are chosen consistently with the clockwise orientation in the face containing $i, iaj, j, ijk$, we have $ijk\to iaj$ is clockwise. That means that $T_i$ and $T_j$ are compatible (strongly in the orientable case). Therefore, $\{T_1,\dots,T_n\}$ form an embedding set (or strong embedding set).

This gives a correspondence $(\pi,\lambda) \mapsto \{T_1,\dots,T_n\}$. Let us first observe that equivalent embedding schemes (obtained by switching over a vertex-set $U\subseteq V(L_n)$) correspond to changing the Eulerian circuits $T_u$ with their inverse circuits $T_u^{-1}$ for $u\in U\cap X_n$. Thus the correspondence preserves equivalence.

To see that the described correspondence is injective, consider two quadrilateral embeddings with schemes $\Pi^1 = (\pi^1,\lambda_1)$ and $\Pi^2 = (\pi^2,\lambda_2)$, whose embedding set $\{T_1,\dots,T_n\}$ is the same. This in particular means that $\pi^1$ and $\pi^2$ agree on $X_n$. Clearly, this implies that the set of quadrangular faces is the same for both embeddings (for every $\pi^1_i$-consecutive neighbors $iaj\to ijk$ the corresponding 4-face has vertices $i,iaj,j,ijk$). By \cite[Corollary 3.3.2]{MT}, this implies that $\Pi^1$ and $\Pi^2$ are equivalent.

In order to show the map is surjective, suppose $\mathcal{E}=\{T_1,\dots, T_n\}$ is an embedding set. We have to show that there is a quadrilateral embedding of $K_n^3$ such that this embedding returns an equivalent embedding set under the correspondence described in the first part of the proof. The quadrilateral embedding will be given by an embedding scheme $\Pi=(\pi,\lambda)$ which is determined as follows.

For $i\in X_n$, let $T_i$ be the circuit $a_0a_1a_2\dots a_N$, where $a_0=a_N$. Then we define the rotation $\pi_i$ around the vertex $i$ as the cyclic permutation:
$$
   \pi_i = (ia_0a_1, ia_1a_2, ia_2a_3, \dots, ia_{N-1}a_N ).
$$
For each triple $ijk\in Y_n$ (where $i<j<k$), set $\pi_{ijk}= (i,j,k)$. Finally, define the signature as follows. Given $i<j<k$, let $e_1$, $e_2$, and $e_3$ be the edges joining $ijk$ with the vertex $i$, $j$, and $k$, respectively. We set $\lambda(e_1)=1$ if the edge $jk$ appears in the direction from $j$ to $k$ in $T_i$. Otherwise, set $\lambda(e_1)=-1$. Similarly, set $\lambda(e_2)=1$ ($\lambda(e_3)=1$) if and only if the edge $ki$ ($ij$) appears in $T_j$ ($T_k$) in the direction from $k$ to $i$ (from $i$ to $j$). By these rules it is clear that equivalent embedding sets give equivalent embedding schemes, and that $\Pi$ will give back the same embedding set. It remains to see that the embedding $\Pi$ is quadrilateral. To see this, consider a triple $ijk$ ($i<j<k$) and the faces around it. Figure \ref{fig:locally around ijk} should help us to visualize the situation. By changing the embedding set $\mathcal E$ to an equivalent embedding set (by possibly changing $T_i,T_j,T_k$ to their inverses), we may assume that $T_i$ traverses $jk$ in the direction from $j$ to $k$, $T_j$ traverses $ki$ from $k$ to $i$, and $T_k$ traverses $ij$ from $i$ towards $j$. Then $\lambda(e_1)=\lambda(e_2)=\lambda(e_3)=1$. Let $T_i: \dots ajkb \dots$ and $T_j: \dots ckia\dots$. Here we used compatibility condition to conclude that $kia$ is a transition in $T_j$. Compatibility condition implies that $T_k: \dots bijc \dots$. This implies that the faces around $ijk$ are precisely as shown in the figure. Since $ijk$ was arbitrary, we conclude that all faces are quadrilaterals, which we were to prove.
\end{proof}

\begin{figure}[htb]
\centering
\includegraphics[width=2.35in]{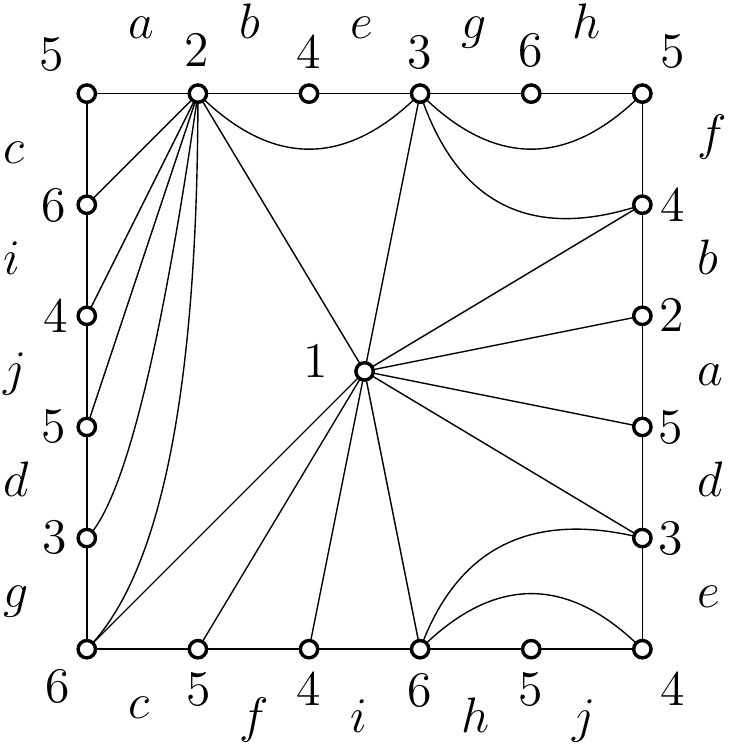}
\caption{An orientable embedding of $K_6^3$ in the triple torus: By putting the vertex $\{i,j,k\}$ inside each triangular face $ijk$ shown in the figure and adding the edges from the vertex to $i,j$ and $k$, we obtain an embedding of the Levi graph $L_6$ with all faces of length $4$.}\label{fig:K6}
\end{figure}

\begin{example}
$\g(K_6^3)=3$.
\end{example}

Figure \ref{fig:K6} shows an embedding of $K_6^3$ in the orientable surface of genus $3$. By identifying the edges on the boundary of the square with the same label $a-h$, we obtain an embedding on the triple torus. Its strong embedding set $\E_6$ is the following:
\begin{align*}
  &T_1: 3,4,2,5,3,6,4,5,6,2; \\
  &T_2: 4,3,1,6,3,5,4,6,5,1; \\
  &T_3: 1,2,4,6,1,5,2,6,5,4; \\
  &T_4: 2,1,3,5,1,6,2,5,6,3; \\
  &T_5: 6,1,4,3,6,4,2,3,1,2; \\
  &T_6: 5,2,4,1,3,4,5,3,2,1.
\end{align*}

\section{Complete $3$-uniform hypergraphs of even order}

In this section, we will construct minimum genus embeddings of $\K$ for every even $n\ge4$.

\begin{theorem}\label{thm:ori}
If $n\geq4$ is even, then
\[
\g(\K)=\frac{(n-2)(n+3)(n-4)}{24}.
\]
\end{theorem}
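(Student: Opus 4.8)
The lower bound is already essentially available. By Proposition~\ref{prop:euler}, $\eg(\K)\geq\lceil\tfrac{1}{12}(n-2)(n+3)(n-4)\rceil$. Writing $n=2m$, one has $(n-2)(n+3)(n-4)=4(m-1)(m-2)(2m+3)$, and a quick case check modulo $3$ (the factor $2m+3$ is divisible by $3$ when $m\equiv0$, while one of $m-1,m-2$ is otherwise) shows $\tfrac{1}{24}(n-2)(n+3)(n-4)$ is an integer; hence $\tfrac1{12}(n-2)(n+3)(n-4)$ is an even integer and the ceiling is redundant. Since $\eg\le 2\g$ always, we get $\g(\K)\geq\tfrac{1}{24}(n-2)(n+3)(n-4)$.

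For the upper bound I would reduce everything to a single combinatorial construction via Theorem~\ref{thm:embed}: it suffices to produce a \emph{strong embedding set} $\{T_1,\dots,T_n\}$, i.e.\ Eulerian circuits $T_i$ in $K_n-i$ that are pairwise strongly compatible. Indeed, by Theorem~\ref{thm:embed} such a set yields an orientable quadrilateral embedding of $L_n$, and any quadrilateral embedding has the right parameters: $L_n$ has $n+\binom n3$ vertices and $3\binom n3$ edges, a quadrilateral embedding has $\tfrac32\binom n3$ faces, so Euler's formula for an orientable surface gives genus $\tfrac12\big(\tfrac12\binom n3-n+2\big)$... more precisely $2h = \tfrac12\binom n3 - n + 2 = \tfrac{1}{12}(n-2)(n+3)(n-4)$, so $h=\tfrac{1}{24}(n-2)(n+3)(n-4)$, matching the lower bound. (Equivalently, invoke the equality case of Lemma~\ref{lem:hypergenus}.) Thus no separate genus computation is needed once the strong embedding set is in hand.

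The construction of the strong embedding set is the real content. Guided by the explicit $\mathcal E_6$ in the $n=6$ example, I would seek a uniform, formula-driven description of each $T_i$, using one rule when $i$ is an odd vertex and another when $i$ is even, as the text foreshadows. A natural template: fix a (near-)cyclic labelling of $[n]\setminus\{i\}$ and describe $T_i$ by a short periodic ``increment pattern'', so that the $\tfrac{n-2}{2}$ transitions through each vertex $j$ are governed by arithmetic; the strong-compatibility demand --- that every transition $ajb$ in $T_i$ is mirrored by the transition $bia$ in $T_j$ --- then becomes a symmetry relation between the increment patterns attached to $i$ and to $j$. One must then verify two things: (i) each $T_i$ is genuinely an Eulerian circuit in $K_n-i$, i.e.\ every unordered pair from $[n]\setminus\{i\}$ occurs exactly once as a pair of consecutive entries; and (ii) strong compatibility holds for all $\binom n2$ pairs, which I expect splits into three regimes according to the parities of $i$ and $j$ (odd--odd, even--even, odd--even).

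The main obstacle will be verifying (ii) globally. Unlike the classical genus of $K_n$, where one builds a single rotation/current graph, here $n$ different Eulerian circuits must be coordinated so that each local transition in one is reproduced \emph{exactly} in another --- a rigid consistency requirement with the flavour of a combinatorial design. Finding a construction simultaneously flexible enough to meet all these constraints (and later to admit many inequivalent variants) yet transparent enough that strong compatibility can be checked case by case is where the difficulty lies; the reduction through Theorem~\ref{thm:embed} and the Euler-characteristic bookkeeping are routine by comparison.
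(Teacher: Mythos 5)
Your lower-bound argument and the reduction of the upper bound to producing a strong embedding set are correct and follow exactly the paper's framework (Proposition~\ref{prop:euler} plus the equality case of Lemma~\ref{lem:hypergenus}, then Theorem~\ref{thm:embed}); the divisibility check making the ceiling redundant is fine. However, there is a genuine gap: the construction of the strong embedding set, which is the entire content of the theorem, is never actually given. You describe a hoped-for ``formula-driven'' template (a periodic increment pattern per vertex, with parity cases) but supply no candidate formulas, no proof that each $T_i$ is an Eulerian circuit of $K_n-i$, and no verification of strong compatibility for any pair --- and you yourself identify this verification as the main difficulty. As it stands, the upper bound $\g(\K)\le\tfrac{1}{24}(n-2)(n+3)(n-4)$ is not established.

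For comparison, the paper does not use a closed-form description at all; it proceeds by induction from $n$ to $n+2$. Given a strong embedding set $\E_n=\{T_1,\dots,T_n\}$, for each odd $i$ one picks a transition $a_i,i+1,b_i$ in $T_i$ (whose mirror $b_i,i,a_i$ lies in $T_{i+1}$ by strong compatibility), breaks it, and splices in an explicitly defined trail $E_i$ that alternates the two new vertices $x=n+1$, $y=n+2$ through a fixed permutation $\sigma_i$ of $[n]\setminus\{i,i+1\}$; this keeps all old transitions intact except the broken ones and creates the needed transitions through $x$ and $y$ inside the $E_i$'s. The two new circuits $T_x'$ and $T_y'$ are then assembled from the subtrails forced by strong compatibility with $T_1',\dots,T_n'$, and one checks these subtrails concatenate into Eulerian circuits that are strongly compatible with each other. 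If you want to rescue your direct approach, you would need to exhibit the increment patterns explicitly and carry out the odd--odd, even--even and odd--even compatibility checks; otherwise the inductive splicing argument is the missing idea.
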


\begin{proof}
By Lemma \ref{lem:hypergenus} and Proposition \ref{prop:euler} it suffices to show that for even $n\geq4$, the graph $L_n$ has an orientable quadrangular embedding.
We will prove it by induction on $n$. The base case when $n=4$ is clear from Figure \ref{fig:K4}, so we proceed with the induction step.

Assume $L_n$ quadrangulates some orientable surface, and $\mathcal{E}_n=\{T_1,\dots,T_n\}$ is the corresponding strong embedding set, where $T_i$ is an Eulerian circuit in $K_n-i$ ($i\in[n]$). Now we consider $L_{n+2}$ with two new vertices in $X_{n+2}=[n+2]$. For brevity we will write $x=n+1$ and $y=n+2$. For every odd vertex $1\leq i\leq n-1$, $T_i$ contains $\tfrac{n-2}{2}$ transitions of the form $a,i+1,b$. We arbitrarily pick one of those transitions, and denote it by $a_i,i+1,b_i$. In the next step, we are going to insert a trail $E_i$ (defined below) between $i+1$ and $b_i$ in $T_i$ to get an Eulerian circuit in $K_{n+2}-i$. The new, longer circuit will be denoted by $T_i^\prime$. For the trail $T_{i+1}$ in $\E_n$, since $a_i,i+1,b_i$ is a transition in $T_i$, the transition $b_i,i,a_i$ is contained in $T_{i+1}$ by the strong compatibility condition. Similarly as what we do for $T_i$, we insert a trail $E_{i+1}$ between $i$ and $a_i$ in $T_{i+1}$, and the new longer circuit we get is denoted by $T_{i+1}^\prime$.

For every odd vertex $1\leq i\leq n-1$, let $\sigma_i$ be the permutation of the set $[n]\setminus\{ i, i+1\}$ that is obtained from the sequence $1,2,\dots,n$ by removing $i$ and $i+1$ and by switching the pairs $2j-1,2j$ for $j=1,\dots,\tfrac{i-1}{2}$. Specifically:
\begin{align*}
  &\sigma_1 = 3,4,5,6,\dots, n-1, n;        \\
  &\sigma_3 = 2,1,5,6,\dots, n-1, n;          \\
  &\quad\quad \cdots \\
  &\sigma_i =2,1,4,3,\dots,i-1,i-2,i+2,i+3,i+4,\dots,n-1,n; \\
  & \quad\quad\cdots \\
  &\sigma_{n-1} =2,1,4,3,\dots,n-2,n-3.
\end{align*}

We construct $E_i$ as follows. We start with $x$, and then insert $y$ and $x$ consecutively in the interspace of numbers in $\sigma_i$, and add $x,y,i+1$ at the end, for every odd vertex $1\leq i\leq n-1$. For the case $E_{i+1}$, we start with $y$, insert $x$ and $y$ (alternating) in the interspace of numbers in $\sigma_i$, and add $y,x,i$ at the end. To be more precise, we get the following:
\begin{align*}
  &E_1 = x,3,y,4,x,5,y,6,\dots,x, n-1,y, n,x,y,2;        \\
  &E_2 = y,3,x,4,y,5,x,6,\dots,y, n-1,x, n,y,x,1;        \\
    &\quad\quad \cdots \\
  &E_i =x,2,y,1,\dots,x,i-1,y,i-2,x,i+2,y,\dots,x,n-1,y,n,x,y,i+1; \\
  &E_{i+1} =y,2,x,1,\dots,y,i-1,x,i-2,y,i+2,x,\dots,y,n-1,x,n,y,x,i; \\
  & \quad\quad\cdots \\
  &E_{n-1} =x,2,y,1,x,4,y,3,\dots,x,n-2,y,n-3,x,y,n; \\
  &E_{n} =y,2,x,1,y,4,x,3,\dots,y,n-2,x,n-3,y,x,n-1.
\end{align*}

It is easy to see that $T_i^\prime$ and $T_{i+1}^\prime$ are Eulerian circuits in $K_{n+2}-i$ and $K_{n+2}-(i+1)$. To verify the strong compatibility of these Eulerian circuits, note that our construction preserves almost all transitions in $\E_n$, except for every odd $i$ we break the transition $a_i,i+1,b_i$ in $T_i$, and the transition $b_i,i,a_i$ in $T_{i+1}$. That means we only need to check the strong compatibility of transitions in $E_i$. If $j$ and $i$ are both odd and $j<i$, this is true since $x,i,y$ is a transition in $E_j$ and $y,j,x$ is a transition in $E_i$. Similar observations hold in the other three cases depending on the parities of $j$ and $i$. This shows that $T_a^\prime$ and $T_b^\prime$ are strongly compatible for every $1\leq a<b\leq n$.

In the final step, we will define Eulerian circuits $T_x^\prime$ and $T_y^\prime$, such that $\E_{n+2}=\{T_1^\prime,\dots,T_n^\prime,T_x^\prime,T_y^\prime\}$ is a strong embedding set. We have to fix some transitions in $T_x^\prime$ and $T_y^\prime$ in order to get strong compatibility with circuits $T_j^\prime$ ($1\leq j\leq n$). We list these transitions in the following tables, where we assume $3\leq i\leq n-3$ is an odd vertex.

\begin{center}
\begin{tabular}{ c | c | c | c | c  }
\hline
\multicolumn{5}{c}{Transitions in $T_x^\prime$ through odd vertices ($3\leq i\leq n-3$)}\\
\hline
3 1 2 & & 2 $i$ $i+1$ & & 2 $n-1$ $n$ \\
5 1 4 & & 4 $i$ 1 & & 4 $n-1$ 1 \\
7 1 6 & & 6 $i$ 3 & & 6 $n-1$ 3 \\
9 1 8 & $\dots$ & \vdots & $\dots$& \\
 & & $i-1$ $i$ $i-4$ && \\
\vdots & & $i+2$ $i$ $i-2$ & & \vdots \\
 & & $i+4$ $i$ $i+3$ & & \\
 & & \vdots & & $n-4$ $n-1$ $n-7$ \\
$n-1$ 1 $n-2$ & & $n-1$ $i$ $n-2$ & &$n-2$ $n-1$ $n-5$ \\
$y$ $1$ $n$  & & $y$ $i$ $n$& & $y$ $n-1$ $n-3$\\
\hline
\end{tabular}
\end{center}
\begin{center}
\begin{tabular}{ c | c | c | c | c }
\hline
\multicolumn{5}{c}{Transitions in $T_x^\prime$ through even vertices ($4\leq i+1\leq n-2$)}\\
\hline
4 2 3  & & 1 $i+1$ 2 & & 1 $n$ 2 \\
6 2 5  & & 3 $i+1$ 4 & & 3 $n$ 4 \\
8 2 7 & & \vdots & & \\
 &  & $i-2$ $i+1$ $i-1$ & & \\
\vdots & $\dots$ & $i+3$ $i+1$ $i+2$ & $\dots$ & \vdots \\
 &  & \vdots & &  \\
$n$ $2$ $n-1$& & $n$ $i+1$ $n-1$ & & $n-3$ $n$ $n-2$ \\
1 2 $y$ & & $i$ $i+1$ $y$ & & $n-1$ $n$ $y$ \\
 \hline
\end{tabular}
\end{center}

If $T_x^\prime$ has all the transitions listed in the tables above, then it is strongly compatible with $T_j^\prime$ for every $1\leq j\leq n$. Since each pair of two different numbers will consecutively appear in $T_x^\prime$ exactly once, the above tables give us the following $n/2$ subtrails $\{A_1,\dots,A_{\frac{n}{2}}\}$ in $T_x^\prime$ where we also let $3\leq i\leq n-3$ be odd.
\begin{align*}
&A_1=y,1,n,2,n-1,n,y;\\
&\quad\quad\cdots\\
&A_{\frac{i+1}{2}}=y,i,n,i+1,F_i(1), F_i(2),\dots, F_i(\tfrac{i-1}{2}), n-i, n+1-i, y; \\
&\quad\quad\cdots \\
&A_{\frac{n}{2}}=y,F_{n-1}(1), F_{n-1}(2),\dots,F_{n-1}(\tfrac{n}{2}),2,y.
\end{align*}
where $F_i(j)$ ($3\leq i\leq n-3$) is a subtrail of length $4$ such that $F_i(j)=n+1-2j,i-2j,n-2j,i+1-2j$ and $F_{n-1}(j)=n+1-2j$.

Similarly, the following tables list all transitions in $T_y^\prime$ forced by strong compatibility with $T_j^\prime$ for every $1\leq j\leq n$.

\begin{center}
\begin{tabular}{ c | c | c | c | c }
\hline
\multicolumn{5}{c}{Transitions in $T_y^\prime$ through odd vertices ($3\leq i\leq n-3$)}\\
\hline
4 1 3 & & 1 $i$ 2 & & 1 $n-1$ 2 \\
6 1 5  & & 3 $i$ 4 & & 3 $n-1$ 4 \\
8 1 7 & & \vdots & & \\
 & & $i-2$ $i$ $i-1$ & & \\
\vdots  & $\dots$ & $i+3$ $i$ $i+2$ & $\dots$ & \\
 & & \vdots & & \vdots \\
$n$ $1$ $n-1$  & & $n$ $i$ $n-1$ & & $n-3$ $n-1$ $n-2$ \\
2 1 $x$ & & $i+1$ $i$ $x$ & & $n$ $n-1$ $x$ \\
\hline
\end{tabular}
\end{center}
\begin{center}
\begin{tabular}{ c | c | c | c | c }
\hline
\multicolumn{5}{c}{Transitions in $T_y^\prime$ through even vertices ($4\leq i+1\leq n-2$)}\\
\hline
3 2 1 & & 2 $i+1$ $i$ & & 2 $n$ $n-1$ \\
5 2 4 & & 4 $i+1$ 1 &  & 4 $n$ 1 \\
7 2 6 & & 6 $i+1$ 3 & & 6 $n$ 3 \\
9 2 8 & $\dots$ & \vdots & $\dots$ & \\
 & & $i-1$ $i+1$ $i-4$ &  & \\
\vdots& & $i+2$ $i+1$ $i-2$ & & \vdots \\
 & & $i+4$ $i+1$ $i+3$ & & \\
 & & \vdots & &$n-4$ $n-2$ $n-7$ \\
$n-1$ 2 $n-2$ & & $n-1$ $i+1$ $n-2$ & & $n-2$ $n$ $n-5$ \\
$x$ $2$ $n$ & & $x$ $i+1$ $n$& & $x$ $n$ $n-3$\\ \hline
\end{tabular}
\end{center}

The above tables also give us the following $n/2$ subtrails $\{B_1,\dots,B_{\frac{n}{2}}\}$ in $T_y^\prime$.
\begin{align*}
&B_1=x,2,n,n-1,x;\\
&\quad\quad\cdots\\
&B_{\frac{i+1}{2}}=x,i+1,n,G_i(1),G_i(2),\dots, G_i(\tfrac{i-1}{2}),n-i,x \qquad (i \text{ is odd, }3\leq i\leq n-3);\\
&\quad\quad\cdots\\
&B_{\frac{n}{2}}=x,n,n-3,G_{n-1}(1), G_{n-1}(2),\dots,G_{n-1}(\tfrac{n-4}{2}),3,2,1,x.
\end{align*}
where $G_i(j)$ ($3\leq i\leq n-3$) is a subtrail of length $4$ such that $G_i(j)=i-2j,n+1-2j,i+1-2j,n-2j$, and $G_{n-1}(j)$ is a subtrail of length $3$ where $G_{n-1}(j)=n+1-2j,n-2j,n-2j-3$.

Finally we are going to combine those subtrails of $T_x^\prime$ and $T_y^\prime$. Note that any combination will give us an Eulerian circuit on $K_{n+2}-x$ or $K_{n+2}-y$ (respectively), since if $ab$ (or $ba$) appears twice in the subtrails of $T_x^\prime$, then either $T_a^\prime$ or $T_b^\prime$ is not an Eulerian circuit. We let $T_x^\prime=A_1,A_2,\dots,A_{\frac{n}{2}}$ and $T_y^\prime=B_1,B_2,\dots,B_{\frac{n}{2}}$, that means the construction is the following:
\begin{align*}
  &T_x^\prime = y,1,\dots,n,y,3,\dots,n-2,y,5,\dots,4,y,n-1,\dots,2;     \\
  &T_y^\prime = x,2,\dots,n-1,x,4,\dots,5,x,n-2,\dots,3,x,n,\dots,1.
\end{align*}

It remains to show that $T_x^\prime$ and $T_y^\prime$ are strongly compatible. This is true because for every odd vertex $3\leq i\leq n-1$, we can see that $n+3-i,y,i$ is a transition in $T_x^\prime$ and $i,x,n+3-i$ is a transition in $T_y^\prime$, as well as $2,y,1$ is a transition in $T_x^\prime$ and $1,x,2$ is a transition in $T_y^\prime$. This completes the proof.
\end{proof}

\begin{lemma}
\label{lem:K6}
The non-orientable genus of $K_6^3$ is $6$.
\end{lemma}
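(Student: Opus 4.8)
The plan is to combine a lower bound from Proposition~\ref{prop:euler} (together with a parity/obstruction argument ruling out the Euler-genus-optimal case on a non-orientable surface) with an explicit construction matching the claimed value. By Proposition~\ref{prop:euler}, $\eg(K_6^3)\geq\lceil\frac{4\cdot 9\cdot 2}{12}\rceil=6$. Since the orientable embedding of Figure~\ref{fig:K6} has genus $3$, we have $\eg(K_6^3)=6$, and hence $\ng(K_6^3)\geq 6$. The crux of the lower bound is therefore to show $\ng(K_6^3)\neq 6$ is impossible to improve to equality with the Euler genus in a way that would give $\ng(K_6^3)=6$ — wait, that is exactly what we want. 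So in fact $\ng(K_6^3)\geq\eg(K_6^3)=6$ is immediate once we know the orientable genus is $3$ (which is the content of the Example preceding this lemma, already established via Theorem~\ref{thm:ori} with $n=6$, giving $\g=\frac{4\cdot 9\cdot 2}{24}=3$). Thus the lower bound $\ng(K_6^3)\geq 6$ is free.

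For the upper bound I would exhibit a non-orientable quadrilateral embedding of $L_6$, equivalently (by Theorem~\ref{thm:embed}) an embedding set $\{T_1,\dots,T_6\}$ of size $6$ that is \emph{not} equivalent to any strong embedding set. A non-orientable quadrangular embedding of $L_6$ has Euler genus $\frac{1}{2}\binom{6}{3}-6+2=\frac{1}{2}\cdot 20-4=6$, so if it is non-orientable its crosscap number is exactly $6$, giving $\ng(K_6^3)\leq 6$. The natural approach is to start from the strong embedding set $\E_6$ displayed just before this section and perform a local modification: pick a single vertex, say $T_1$, and replace one of its transitions $a,j,b$ by a different pairing of the edges at $j$ that still yields an Eulerian circuit in $K_6-1$ and still meets the (ordinary, not strong) compatibility condition with all other $T_i$. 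Concretely, one reverses a suitable subtrail of one circuit so that some transition $ajb$ is matched by $aib$ in $T_j$ rather than by $bia$; then at least one edge receives signature $-1$ and cannot be switched away, forcing non-orientability. One then checks that compatibility is preserved for every pair.

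The main obstacle is the bookkeeping: after altering one circuit one must re-verify pairwise compatibility for all $\binom{6}{2}=15$ pairs and confirm the resulting embedding is genuinely non-orientable (i.e.\ the signature is not switching-equivalent to the trivial one). A cleaner alternative, which I would actually prefer to write out, is to produce the embedding set directly by ad hoc construction of six Eulerian circuits in $K_6-i$, list them explicitly (as was done for $\E_6$), verify compatibility by inspection of the $\frac{n-2}{2}=2$ transitions through each vertex in each circuit, and then argue non-orientability by exhibiting a closed walk in $L_6$ whose signature is $-1$ (for instance a facial-type walk through an odd number of negative edges that bounds no disk). Either way the argument is a finite check; the only real content is choosing the circuits so that compatibility holds but strong compatibility provably fails, and I expect writing down one clean such example to be routine once the first modification of $\E_6$ is identified.
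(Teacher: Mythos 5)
Your overall strategy is the same as the paper's: the lower bound $\ng(K_6^3)\ge 6$ comes from the Euler-formula bound (and indeed it is free directly from Lemma~\ref{lem:hypergenus}/Proposition~\ref{prop:euler}, since $\ng\ge\eg$ by definition; the detour through $\g(K_6^3)=3$ in your first paragraph is unnecessary, though harmless), and the upper bound is to be obtained from Theorem~\ref{thm:embed} by exhibiting an embedding set of size $6$ that is not equivalent to any strong embedding set. The genuine gap is that you never produce such an embedding set: the entire content of the paper's proof of this lemma is an explicit list of six Eulerian circuits $T_1,\dots,T_6$ in $K_6-i$, together with the verification that all fifteen pairs are compatible and that for the pair $(T_3,T_5)$ neither $T_3$ nor $T_3^{-1}$ is strongly compatible with $T_5$ (which, since strong compatibility is preserved under reversing both circuits, rules out every choice of reversals and hence equivalence to a strong set). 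Declaring this existence ``routine'' is not a proof; existence of a compatible-but-nowhere-repairable set is exactly what must be demonstrated, and nothing earlier in the paper implies it.

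Moreover, the specific route you sketch for producing it is doubtful as stated. Reversing a subtrail of one circuit so that a single matched transition flips from $bia$ to $aib$ is not a local move: to stay an Eulerian circuit the reversed piece must be a closed subwalk based at some vertex $v$, and the reversal then changes the two transitions of that circuit through $v$, so compatibility with $T_v$ breaks unless $T_v$ is modified as well, potentially cascading further; it is not evident this process can be closed off. (Consistent with this, the non-strong set in the paper is not a one-transition tweak of the strong set $\E_6$ --- several of its circuits differ globally.) Your fallback criterion for non-orientability (a one-sided closed walk, or equivalently a pair admitting both a strong-direction and a weak-direction match under every choice of reversals) is the right kind of check, but it, too, can only be carried out on an explicit example, which the proposal does not supply.
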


\begin{proof}
By Lemma \ref{lem:hypergenus}, we have $\ng(K_6^3)\geq 6$. Then it suffices to provide a construction of an embedding of $K_6^3$ in some non-orientable surfaces of genus $6$. By Theorem \ref{thm:embed}, we only need to provide an embedding set which is not strong. The description of such an embedding set $\mathcal{E}_6$ is given below:
\begin{align*}
  &T_1: 4,2,5,3,6,4,5,6,2,3; \\
  &T_2: 4,6,5,1,4,3,1,6,3,5; \\
  &T_3: 1,2,4,6,1,5,2,6,5,4; \\
  &T_4: 5,1,6,2,5,6,3,2,1,3; \\
  &T_5: 6,3,4,2,3,1,2,6,4,1; \\
  &T_6: 2,1,5,3,2,5,4,3,1,4.
\end{align*}

Note that $6,3,4$ is a transition in $T_5$ and $6,5,4$ is a transition in $T_3$, and also $2,3,1$ is a transition in $T_5$ and $1,5,2$ is a transition in $T_3$. That means, neither $T_3$ nor $T_3^{-1}$ is strongly compatible with $T_5$. It is not hard to see that all pairs in the set $\mathcal{E}_6$ are compatible.
\end{proof}

\begin{theorem}
\label{thm:non}
If $n\geq6$ is even, then
\[
\ng(\K)=\frac{(n-2)(n+3)(n-4)}{12}.
\]
\end{theorem}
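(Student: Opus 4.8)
\emph{Proof proposal.} The plan is to mirror the proof of Theorem~\ref{thm:ori}. For the lower bound, since $\eg(G)\le\ng(G)$ for every graph $G$, Proposition~\ref{prop:euler} gives
\[
\ng(\K)\ \ge\ \eg(\K)\ \ge\ \Big\lceil\tfrac{(n-2)(n+3)(n-4)}{12}\Big\rceil\ =\ \tfrac{(n-2)(n+3)(n-4)}{12},
\]
the last equality because $\tfrac{(n-2)(n+3)(n-4)}{12}=\tfrac12\binom n3-n+2$ is an integer for even $n$. For the upper bound it then suffices, by Lemma~\ref{lem:hypergenus} and Theorem~\ref{thm:embed}, to construct for each even $n\ge6$ an embedding set of size $n$ whose equivalence class contains no strong embedding set: such a set corresponds to a non-orientable quadrilateral embedding of $L_n$, which by Euler's formula lies in $\mathbb{N}_c$ with $c=\tfrac12\binom n3-n+2=\tfrac{(n-2)(n+3)(n-4)}{12}$. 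Call such an embedding set \emph{robustly non-strong}; a convenient sufficient condition is that it contains a pair $T_p,T_q$ for which, among the $\tfrac{n-2}{2}$ correspondences between the transitions through $q$ in $T_p$ and the transitions through $p$ in $T_q$, at least one realises strong compatibility and at least one does not, since reversing $T_p$ or $T_q$ flips all of these correspondences at once and hence no system of reversals can make every pair strongly compatible.

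I would run the induction in the proof of Theorem~\ref{thm:ori} starting from the non-strong embedding set $\E_6$ of Lemma~\ref{lem:K6} in place of the planar $\E_4$. First, $\E_6$ is robustly non-strong: in the proof of Lemma~\ref{lem:K6} the pair $(T_3,T_5)$ is displayed with the ``weak'' correspondence $6,3,4\leftrightarrow 6,5,4$ and the ``strong'' correspondence $2,3,1\leftrightarrow 1,5,2$, which is exactly the condition above. For the inductive step, suppose $\E_n$ is a compatible embedding set of size $n$ that is robustly non-strong, witnessed by the pair $(T_3,T_5)$. I would build $\E_{n+2}$ by applying the construction of Theorem~\ref{thm:ori} verbatim, with one adjustment: there, for each odd $i\le n-1$, one selects a transition $a_i,i+1,b_i$ in $T_i$ and invokes strong compatibility of $\E_n$ to locate the transition $b_i,i,a_i$ in $T_{i+1}$, and this is the only use of strongness. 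When $\E_n$ is merely compatible, I would first replace $T_i$ or $T_{i+1}$ by its reverse, if necessary, so that the selected transition is strongly matched; this is harmless because each of $T_i,T_{i+1}$ belongs to exactly one of the pairs $(1,2),(3,4),\dots,(n-1,n)$ processed by the construction, so the reversal only moves $\E_n$ within its equivalence class, and it is always possible because reversing either circuit flips all $\tfrac{n-2}{2}$ of the relevant correspondences simultaneously. After this adjustment, the rest of the proof of Theorem~\ref{thm:ori} --- the definition of the trails $E_i$, the two tables forcing the transitions of $T_x'$ and $T_y'$, the assembly of $T_x'$ and $T_y'$, and all compatibility checks --- applies word for word and produces a compatible embedding set $\E_{n+2}=\{T_1',\dots,T_n',T_x',T_y'\}$.

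It remains to verify that $\E_{n+2}$ is again robustly non-strong. The extension breaks only, for each odd $i\le n-1$, one transition through the even vertex $i+1$ in $T_i$ and one transition through the odd vertex $i$ in $T_{i+1}$, and it creates new transitions only through the new vertices $x=n+1$ and $y=n+2$ and through vertices appearing inside the inserted trails $E_i$. Since the vertex $3$ is odd and distinct from the vertex $6$ whose transitions are altered in $T_5$ (and symmetrically, $5$ is distinct from the vertex whose transitions are altered in $T_3$), all $\tfrac{n-2}{2}$ transitions through $3$ in $T_5$ and through $5$ in $T_3$ that come from $\E_n$ survive, so the pair $(T_3',T_5')$ still carries both a strong and a weak correspondence among them; the reversals possibly performed in the adjustment step merely interchange the labels ``strong'' and ``weak'' here, preserving the mixed pattern. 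Hence $\E_{n+2}$ is robustly non-strong, which completes the induction and the proof.

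The hard part will be the bookkeeping, not any new idea: one must check that nothing in the rather intricate construction of Theorem~\ref{thm:ori} uses strong compatibility of the inductive hypothesis beyond the single place patched above --- in particular that the two transition tables for $T_x'$ and $T_y'$, and the closing argument that $T_x'$ and $T_y'$ are strongly compatible, are insensitive to which strong correspondence was selected and to the reversals performed --- and that the witnessing pair $(T_3,T_5)$ never loses its mixed strong/weak pattern along the induction. The only routine calculation is the identity $\tfrac12\binom n3-n+2=\tfrac{(n-2)(n+3)(n-4)}{12}$ and its integrality for even $n$, which is what makes the lower and upper bounds coincide.
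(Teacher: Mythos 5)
Your proposal is correct and takes essentially the same approach as the paper: its proof of Theorem~\ref{thm:non} is precisely this induction, re-running the construction of Theorem~\ref{thm:ori} from the base case of Lemma~\ref{lem:K6} and observing that the pair $(T_3,T_5)$ remains compatible but never strongly compatible. Your explicit patch---reversing one circuit of a processed pair when the selected transition is only weakly matched (needed already for the pair $(T_5,T_6)$ of $\E_6$ as listed), and the mixed strong/weak criterion showing no reversals can make $(T_3,T_5)$ strong---only fills in details the paper leaves implicit.
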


\begin{proof}
The proof follows the same inductive construction we used in the proof of Theorem \ref{thm:ori}. Instead of using $K_4^3$ as the base step, we use Lemma \ref{lem:K6} as the base. Therefore, by the way we constructed the embedding set of $K_n^3$, Eulerian circuits $T_3$ and $T_5$ will always be compatible, but they will never be strongly compatible.
\end{proof}

\section{Number of non-isomorphic embeddings}

We say that two embeddings $\phi_1,\phi_2: G\to S$ are \emph{isomorphic} if there is an automorphism $\alpha$ of $G$ such that the embeddings $\phi_1$ and $\phi_2\alpha$ are equivalent.
In this section we will show how to obtain many non-isomorphic minimum genus embeddings of $\K$ when $n$ is even.

The number of non-equivalent (2-cell) embeddings of $\K$ in some surface is equal to
\begin{equation}\label{eq:x}
   2^{\binom{n}{3}} \left( \left( \binom{n-1}{2}-1 \right)! \right)^n 2^{2\binom{n}{2}-n+1}
   = 2^{(1-o(1))n^3\log n}.
\end{equation}
This follows from the fact that non-equivalent 2-cell embeddings correspond to different rotation systems. Each vertex in $X_n$ has degree $\binom{n-1}{2}$ and thus has $\big(\binom{n-1}{2}-1\big)!$ possible rotations, and each vertex in $Y_n$ has degree $3$ and thus $2$ possible rotations. The last factor in (\ref{eq:x}) corresponds to the number of inequivalent signatures (on each edge outside a fixed spanning tree we can select the signature freely).

The genera of all these embeddings take only $O(n^3)$ different values, but the majority of them will have their genus much larger than the minimum possible genus. The number of minimum genus embeddings is indeed much smaller as made explicit in the following.

\begin{lemma}
The number of non-equivalent embeddings of $\K$ into a surface of Euler genus $\tfrac{1}{6}(n-2)(n+3)(n-4)$ is at most $2^{(\frac{1}{4}+o(1))n^3\log n}$, where the logarithm is taken base $2$.
\end{lemma}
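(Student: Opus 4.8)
The plan is to count the quadrilateral embeddings of $L_n$ directly via the bijection of Theorem~\ref{thm:embed}, rather than the embedding schemes $(\pi,\lambda)$. By that theorem, a minimum Euler genus embedding of $\K$ corresponds (up to equivalence) to an embedding set $\mathcal{E}=\{T_1,\dots,T_n\}$, where each $T_i$ is an Eulerian circuit in $K_n-i$ and the compatibility conditions hold. So it suffices to bound the number of embedding sets of size $n$, and in fact we may bound the number of \emph{compatible pairs of sequences} even more crudely by just counting the circuits $T_i$ subject to the constraint imposed by compatibility.

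First I would count Eulerian circuits in a single $K_n-i$ without any constraint: such a circuit is determined by a cyclic sequence $(a_1a_2\dots a_N)$ with $N=\binom{n-1}{2}$ consecutive pairs running over all pairs of $[n]\setminus\{i\}$, so there are at most $N!$ of them, which is $2^{(\frac12+o(1))n^2\log n}$; taking $n$ of them independently already gives $2^{(\frac12+o(1))n^3\log n}$, which is the trivial bound and not good enough by a factor of $2$ in the exponent. The key observation for the improvement is that once $T_1,\dots,T_{i-1}$ are fixed, the circuit $T_i$ is \emph{heavily constrained}: for each $j<i$, every transition $aib$ through $i$ in $T_j$ forces (by the compatibility condition) a transition $ajb$ or $bja$ through $j$ in $T_i$. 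Since $T_j$ has exactly $\tfrac{n-2}{2}$ transitions through $i$, these force $\tfrac{n-2}{2}$ of the $\tfrac{n-2}{2}$ transitions through each such $j$ in $T_i$ — i.e.\ once we know $T_1,\dots,T_{i-1}$, \emph{all} transitions through $1,2,\dots,i-1$ in $T_i$ are determined up to a local two-fold choice (the orientation $ajb$ vs $bja$). What remains free in $T_i$ is only the ``interleaving'': how the forced fragments through the already-processed vertices are patched together using the $n-i$ not-yet-processed vertices. I would show that the number of such completions of $T_i$ is at most roughly $\bigl((n-i)!\bigr)^{O(n)} \cdot 2^{O(n)}$ — more carefully, the free part of $T_i$ has size $O(n(n-i))$ rather than $O(n^2)$, so the number of choices for $T_i$ is $2^{O(n(n-i)\log n)}$. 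Summing the exponent over $i=1,\dots,n$ gives $\sum_i O(n(n-i)\log n) = O(n^3\log n / 2)$, and chasing the constant carefully yields the bound $2^{(\frac14+o(1))n^3\log n}$.

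To make the ``free part has size $O(n(n-i))$'' step precise, I would think of building $T_i$ as follows. The transitions through $1,\dots,i-1$ that are forced partition the relevant portion of the edge set of $K_n-i$ into a collection of arcs (maximal forced subtrails); each endpoint of such an arc is at a vertex in $\{i,i+1,\dots,n\}$, the unprocessed set, which has size $n-i$. The number of arcs is $O(n)$ (there are $O(n)$ forced transitions, hence $O(n)$ arcs once we account for how they chain together), and an Eulerian circuit completing them is obtained by ordering these $O(n)$ arcs together with the $O(n \cdot (n-i))$ edges incident only to unprocessed vertices, subject to the trail condition. A generous bound for the number of ways is $\bigl(O(n)+O(n(n-i))\bigr)! \le 2^{O(n(n-i)\log n)}$, and then multiply by $2^{O(n)}$ for the local orientation choices. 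One also multiplies by a global $2^n$ for the choice of inverses $T_i$ vs $T_i^{-1}$, and by the number of non-orientable vs orientable signature choices, but all of these are $2^{O(n)}$ and absorbed into the $o(1)$.

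The main obstacle I expect is making the bound on the completions of $T_i$ rigorous with the \emph{correct} constant: it is easy to get $2^{O(n^3\log n)}$, but pinning down $\tfrac14$ requires showing that the genuinely free part of each $T_i$ involves only pairs $\{p,q\}$ with at least one of $p,q$ in $\{i,\dots,n\}$, so that the total count over all $i$ of ``free pairs'' is $\sum_i \binom{n-i}{2} + (i-1)(n-i) \sim \tfrac14 \binom{n}{2}\cdot n$-ish rather than $n\binom{n}{2}$; equivalently one must check that the forced transitions through $\{1,\dots,i-1\}$ really do leave no freedom among the pairs inside $\{1,\dots,i-1\}$. I would isolate this as a combinatorial lemma about how much of an Eulerian circuit is pinned down by prescribing all of its transitions through a fixed vertex subset, and verify that the dominant contribution to the exponent is $\sum_{i=1}^{n}\bigl(\tfrac12 (n-i)^2 + (i-1)(n-i)\bigr)\log n$, whose leading term is $\tfrac14 n^3\log n$.
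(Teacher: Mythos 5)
Your overall strategy coincides with the paper's: reduce to counting embedding sets via Theorem~\ref{thm:embed}, build $T_1,\dots,T_k,\dots$ sequentially, and observe that every transition of $T_k$ through an already-processed vertex $j<k$ is forced by compatibility with $T_j$, so the only freedom lies at unprocessed vertices. The problem is that your constant-chasing — which is the whole content of the lemma — does not come out to $\tfrac14$ as written. Two concrete issues. First, the number of maximal forced arcs is not $O(n)$: the forced transitions of $T_k$ through $\{1,\dots,k-1\}$ number $(k-1)\tfrac{n-2}{2}=\Theta(n^2)$, and the number of maximal arcs equals the number of visits of $T_k$ to unprocessed vertices, namely $(n-k)\tfrac{n-2}{2}=\binom{n-k}{2}+\tfrac{(k-1)(n-k)}{2}$, which is $\Theta(n^2)$ for most $k$. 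Second, and decisively, your final accounting charges $\log n$ bits to every pair with at least one unprocessed endpoint, i.e.\ an exponent $\sum_{k}\bigl(\tfrac12(n-k)^2+(k-1)(n-k)\bigr)\log n$; this sum has leading term $\tfrac13 n^3\log n$, not $\tfrac14 n^3\log n$ as you assert, so as written your argument only yields $2^{(\frac13+o(1))n^3\log n}$.

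The fix is to charge one choice per \emph{visit} to an unprocessed vertex rather than one per free pair: each visit consumes two edge-ends, so mixed (processed--unprocessed) pairs must be weighted $\tfrac12$. Concretely, this is the paper's argument: when $T_k$ arrives at a vertex $i<k$ from $a$, the exit is uniquely determined (the edge $ak$ is traversed exactly once in $T_i$, so $a$ lies in exactly one transition of $T_i$ through $k$ — there is not even a two-fold choice, though a factor $2^{O(n^2)}$ would be harmless), while at the $r$-th visit to a vertex $i>k$ there are at most $n-1-2r$ available edges, hence at most $(n-3)!!$ choices per unprocessed vertex and at most $\bigl((n-3)!!\bigr)^{n-k}$ choices for $T_k$. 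Since $\log\bigl((n-3)!!\bigr)=(\tfrac12+o(1))\,n\log n$, summing $(n-k)\cdot\tfrac{n}{2}\log n$ over $k$ gives $(\tfrac14+o(1))n^3\log n$. Equivalently, in your arc picture the number of arcs is about $\tfrac{(n-k)(n-1)}{2}$, and bounding the completions by the factorial of that number gives the same exponent. A final small point: for odd $n$ you should note, as the paper does, that $K_{n-1}$ has no Eulerian circuit, so there are no quadrangular embeddings at all and the bound is vacuous there.
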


\begin{proof}
Note that an embedding of $\K$ into a surface of genus $\tfrac{1}{6}(n-2)(n+3)(n-4)$ is a quadrangular embedding of $L_n$. When $n$ is odd, $K_{n-1}$ does not have an Eulerian trail, which implies $L_n$ does not have quadrangular embeddings.
Thus we may assume that $n$ is even since otherwise there are no such embeddings.
By Theorem \ref{thm:embed}, minimum genus embeddings of $\K$ into surfaces of Euler genus $\tfrac{1}{6}(n-2)(n+3)(n-4)$ are quadrilateral and are in a bijective correspondence with embedding sets. These are sets of Eulerian circuits satisfying compatibility conditions. Their number can be estimated as follows.

Suppose that compatible Eulerian circuits $T_1,\dots,T_{k-1}$ are already chosen ($1\le k\le n$). To construct the next circuit $T_k$, we start by an arbitrary edge in $K_n-k$. If we come to a vertex $i < k$ when following the last chosen edge, the transition is determined by compatibility with $T_i$. On the other hand if we come to a vertex $i>k$ for the $r$th time, there are (at most) $n-1-2r$ edges which can be chosen as the next edge on the trail. All together, when passing through such a vertex $i$, we have at most $(n-3)(n-5)(n-7)\cdots 3\cdot 1 = (n-3)!!$ choices. Therefore the number of ways to choose $T_k$ is at most $((n-3)!!)^{n-k}$. Thus the number of embedding sets is at most:
$$
   ((n-3)!!)^{(n-1)+(n-2)+\cdots+1+0} = 2^{(\frac{1}{4}+o(1))n^3 \log n}
$$
and this completes the proof.
\end{proof}

Note that in the proof we are actually giving a bound on compatible closed trail decompositions. Nevertheless, this estimate may be rather tight, since the number of Eulerian circuits in $K_{n-1}$ is $2^{(\frac{1}{2}+o(1))n^2\log n}$, see \cite[Theorem 4]{MR}.

Now we will turn to a lower bound on the number of non-isomorphic minimum genus embeddings that can be obtained by a simple generalization of the construction in our proofs of Theorems \ref{thm:ori} and \ref{thm:non}.

\begin{theorem}
If $n$ is even, there exist at least $2^{(\frac{1}{4}-o(1))n^2\log n}$ non-isomorphic minimum genus embeddings of $\K$ in each, the orientable and the non-orientable surface of Euler genus $\tfrac{1}{6}(n-2)(n+3)(n-4)$.
\end{theorem}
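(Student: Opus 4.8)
The plan is to exploit the flexibility in the inductive construction of Theorems~\ref{thm:ori} and~\ref{thm:non} to produce many embedding sets that are provably non-isomorphic, and then to count them. In the induction step passing from $K_n^3$ to $K_{n+2}^3$, for every odd vertex $i$ with $1\le i\le n-1$ we chose \emph{one} transition $a_i,i+1,b_i$ in $T_i$ out of the $\tfrac{n-2}{2}$ available transitions through $i+1$, and the rest of the construction (the trails $E_i$, $E_{i+1}$, and the circuits $T_x',T_y'$) was completely determined by these choices. So at stage $n$ there are roughly $\bigl(\tfrac{n-2}{2}\bigr)^{n/2}$ choices, and iterating over the even values $n=4,6,8,\dots,N$ gives a product $\prod_{n} \bigl(\tfrac{n-2}{2}\bigr)^{n/2}$, whose logarithm is $\sum_{n\le N}\tfrac{n}{2}\log\tfrac{n-2}{2} = (\tfrac14-o(1))N^2\log N$. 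This yields the target count of \emph{embedding sets}; by Theorem~\ref{thm:embed} these are in bijection with equivalence classes of quadrilateral embeddings of $L_N$. The same argument applies verbatim in the non-orientable case starting from the base step of Lemma~\ref{lem:K6}.

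The key remaining point is to pass from ``many non-equivalent embeddings'' to ``many non-isomorphic embeddings,'' and for this I would quotient out by the automorphism group of $L_n$. Since $n\ge4$ is even and $K_n^3$ is complete, every automorphism of $L_n$ preserves the bipartition $(X_n,Y_n)$ (the two sides have different sizes for $n\ge5$, and $n=4$ is the trivial base case), and is therefore induced by a permutation of $[n]$; hence $|\mathrm{Aut}(L_n)| = n! = 2^{O(n\log n)}$. Dividing the number of non-equivalent embeddings by $|\mathrm{Aut}(L_n)|$ changes the exponent $(\tfrac14-o(1))n^2\log n$ only in the $o(1)$ term, so the lower bound on the number of \emph{non-isomorphic} embeddings is still $2^{(\frac14-o(1))n^2\log n}$.

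The main obstacle, and the step requiring the most care, is verifying that distinct sequences of transition choices really do produce non-equivalent embedding sets — i.e.\ that the map from choice-sequences to equivalence classes of embedding sets is (close to) injective. Two embedding sets are equivalent only if they agree up to simultaneously reversing some of the circuits $T_i\mapsto T_i^{-1}$; reversal affects the counting by at most a factor $2^n$, which is absorbed into the $o(1)$. The substantive claim is that the choice of $a_i,i+1,b_i$ can be \emph{read off} from the resulting circuit $T_i'$ in $K_{n+2}-i$: indeed the inserted block $E_i$ is the unique maximal subtrail of $T_i'$ all of whose internal vertices lie in $\{x,y\}=\{n+1,n+2\}$, so $E_i$ — and hence the pair $\{a_i,b_i\}$ and the neighboring pair $\{i+1,b_i\}$ as the endpoints where $E_i$ was spliced — is determined by $T_i'$ alone (up to the global reversal, which again costs only a bounded factor). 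Thus different choices at stage $n$ give embedding sets that differ already in the restriction to $K_{n+2}-i$, and an easy downward induction shows that different full choice-sequences give non-equivalent embedding sets. I would also remark that because we only need a lower bound, it suffices to use, say, the single fixed choice at all but the last stage and vary only the final stage's $\sim(n-2)^{n/2}$ choices; but iterating over all stages is what produces the clean $\tfrac14 n^2\log n$ constant matching the upper bound of the previous lemma.
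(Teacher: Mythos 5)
Your overall architecture matches the paper's: exploit the freedom in the inductive doubling step, count inequivalent embedding sets via the bijection of Theorem~\ref{thm:embed}, and then pass to isomorphism classes by dividing by at most $n!=2^{O(n\log n)}$, which is absorbed in the $o(1)$. That last reduction, and your observation that the inserted block $E_i$ can be recovered from $T_i'$ (so different choices give inequivalent sets, up to reversals costing at most a factor $2^n$), are fine and in fact slightly more careful than the paper's own wording.

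The genuine gap is in the count. Varying only the transition choices gives, at the step from $n$ to $n+2$, a factor $\bigl(\tfrac{n-2}{2}\bigr)^{n/2}$, but the induction advances in steps of two, so the logarithm of the product is $\sum_{n\ \mathrm{even},\, n\le N} \tfrac{n}{2}\log\tfrac{n-2}{2} = (\tfrac{1}{8}-o(1))N^2\log N$, not $(\tfrac14-o(1))N^2\log N$; your evaluation of the sum implicitly ranges over all integers $n\le N$ rather than only the even ones. Thus your argument proves only $2^{(\frac18-o(1))n^2\log n}$ non-isomorphic embeddings, short of the stated bound. The paper closes this factor-of-two deficit in the exponent by exploiting additional freedom in the doubling step that you do not use: the pairing of the $n-2$ old vertices into consecutive ``odd--even'' pairs need not be $\{1,2\},\{3,4\},\dots$ but can be any perfect matching of $K_{n-2}$, contributing an extra factor $(n-3)!!$ per step (plus a lower-order factor $2^{(n-2)/2}$ from swapping the two vertices within each pair). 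Since $\log\bigl((n-3)!!\bigr)=(\tfrac12-o(1))\,n\log n$ is of the same order as the transition-choice contribution at each step, the recursion
\begin{equation*}
R_n \;\geq\; \tfrac12\Bigl(\tfrac{n-4}{2}\Bigr)^{\frac{n-2}{2}}(n-3)!!\;2^{\frac{n-2}{2}}\,R_{n-2}
\end{equation*}
yields $\log R_n \ge (\tfrac14-o(1))n^2\log n$. To repair your proof you would need to incorporate (and justify the injectivity of) this matching freedom — i.e.\ argue that distinct matchings, distinct within-pair orientations, and distinct transition choices all yield inequivalent embedding sets — or find some other source of choices of comparable size.
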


\begin{proof}
Let $I_n$ be the number of non-isomorphic minimum genus embeddings of $\K$. Here we will only deal with the orientable case; for the non-orientable embeddings, arguments are the same.

Recall that in the construction of the embedding set $\E_n=\{T_1^\prime,\dots,T_n^\prime\}$ of $\K$, for every odd vertex $i\in[n-2]$ we arbitrarily pick a transition $a_i,i+1,b_i$ in $T_i\in\E_{n-2}$, and insert a subtrail $E_i$. Since $i+1$ appears exactly $\frac{n-4}{2}$ times in $T_i$, different choice of transitions through $i+1$ will give us different trails $T_i^\prime$ and $T_{i+1}^\prime$. Also, the choice of consecutive odd-even pairs $i,i+1$ gives us a perfect matching of $K_{n-2}$. It is easy to see that any perfect matching of $K_{n-2}$ can be used as such a pairing and this will give us different embedding sets $\E_n$. Moreover, fixing a perfect matching, for example, $i,i+1$ for every odd $i$, we can exchange $i$ and $i+1$ to get a new embedding set. Note that $x$ and $y$ are symmetric in our construction, and can be exchanged.

Let $R_n$ denote the resulting number of inequivalent embedding sets. Then we have:
$$
   R_n \geq \frac{1}{2} \left(\frac{n-4}{2}\right)^{\frac{n-2}{2}}(n-3)!!\ 2^{\frac{n-2}{2}}\ R_{n-2}.
$$
Therefore,
\begin{align*}
\log R_n & \geq\log\prod_{k=2}^{\frac{n-2}{2}} (k-1)^k(2k-1)!!\ 2^{k-1}\\
&\geq\log\frac{(\frac{n-4}{2})!^{\frac{n-2}{2}} 2^{\frac{n}{2}(\frac{n}{2}-2)} \prod_{k=2}^{\frac{n-4}{2}}k!}{\prod_{k=1}^{\frac{n-6}{2}}k!}\\[2mm]
&=\frac{n(n-4)}{4}\,\log\frac{n-4}{2e}+ O(n^2) \\[2mm]
&= (\tfrac{1}{4}-o(1))\, n^2 \log n\, .
\end{align*}
That means that there are at least $2^{(\frac{1}{4}-o(1))n^2\log n}$ inequivalent minimum genus embeddings.

If $\phi_1$ and $\phi_2$ are non-equivalent but isomorphic embeddings, then there is an automorphism $\alpha$ such that $\phi_1$ is equivalent with $\phi_2\alpha$. Each such automorphism is determined by the values $\alpha(i)$ ($i\in [n]$), i.e., by the permutation $\alpha|_{X_n}$ of order $n$. This means that there are at most $n!$ embeddings that are isomorphic with $\phi_1$. 
Thus, the number $I_n$ of isomorphism classes of embeddings is at least $R_n/n!$. Since $\log(n!) = (1+o(1))n\log n$, the denominator in the lower bound on $\log I_n$ decreases the value of $\log R_n$ insignificantly, and thus $\log I_n \ge (\frac{1}{4}-o(1))n^2\log n$.
This completes the proof.
\end{proof}

\section{Hypergraphs with multiple edges}

In this section we are going to investigate the genus of complete $3$-uniform graphs with multiple edges. These results will partially answer the question the authors asked in \cite{JM}. In that work, the genus of random bipartite graphs $\mathcal{G}_{n_1,n_2,p}$ is considered, where $n_1\gg1$ and $n_2$ is a constant, and the edge probabilities are $p=\Theta(n_1^{-1/3})$. In that regime, the following hypergraph occurs. Let $m\K$ be the complete $3$-uniform hypergraph where each triple occurs $m$ times, i.e., each edge of $\K$ has multiplicity $m$.
In this situation, each trail $T_i^m$ in the embedding set $\E_n^m$ is an Eulerian circuit in $m(K_n-i)$. Similarly, we say two trails $T_i$ and $T_j$ are {\em strongly compatible} ({\em compatible}) if transitions $ajb$ appear in $T_i$ exactly $t$ times, then transitions $bia$ ($aib$ or $bia$) appear in $T_j$ exactly $t$ times. It is easy to see that Theorem \ref{thm:embed} is still true in this case, the proof is similar and we omit the details. Therefore, we have the following result.

\begin{theorem}\label{thm:even}
If $n\ge4$ is even and $m\ge2$, then $\g(m\K)=\frac{(n-2)(mn(n-1)-12)}{24}$ and $\ng(m\K)=\frac{(n-2)(mn(n-1)-12)}{12}$.
\end{theorem}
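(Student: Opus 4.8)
\emph{Strategy and lower bound.} Following the proof of Theorem~\ref{thm:main}, I would obtain the lower bound from Euler's formula and the matching upper bound by constructing suitable Eulerian circuits and invoking the multigraph version of Theorem~\ref{thm:embed}. Since $m\K$ has $n$ vertices and $m\binom{n}{3}$ edges, Lemma~\ref{lem:hypergenus} gives
\[
\eg(m\K)\ \ge\ \tfrac{1}{2}m\binom{n}{3}-n+2\ =\ \frac{(n-2)(mn(n-1)-12)}{12},
\]
with equality exactly when the Levi graph $L_n^m$ of $m\K$ has a quadrilateral embedding. As $\eg\le 2\g$ and $\eg\le\ng$ hold for every graph, it remains to exhibit an orientable quadrilateral embedding of $L_n^m$ (which forces $2\g=\eg$, hence the stated value of $\g$, an integer because $n$ is even) and a non-orientable one (which forces $\ng=\eg$).

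\emph{The blow-up construction.} Take the strong embedding set $\E_n=\{T_1,\dots,T_n\}$ of $\K$ produced in the proof of Theorem~\ref{thm:ori} in the orientable case, or the non-strong embedding set of Theorem~\ref{thm:non} (for even $n\ge 6$) in the non-orientable case. For each $i$, let $T_i^m$ be the closed walk obtained by traversing $T_i$ exactly $m$ times in a row. Then $T_i^m$ uses every edge of $K_n-i$ exactly $m$ times and so is an Eulerian circuit of $m(K_n-i)$; moreover, read cyclically, the transition multiset of $T_i^m$ is exactly $m$ disjoint copies of that of $T_i$ (the two wrap-around transitions of $T_i$ supply the $m$ seams). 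Hence for $i\ne j$ the multiplicities of $ajb$ in $T_i^m$ and of $bia$ in $T_j^m$ are $m$ times the corresponding counts for $T_i,T_j$, so in the multiedge sense defined above $T_i^m$ and $T_j^m$ are (strongly) compatible if and only if $T_i$ and $T_j$ are. Thus $\E_n^m=\{T_1^m,\dots,T_n^m\}$ is an embedding set, strong whenever $\E_n$ is, and by the multigraph analogue of Theorem~\ref{thm:embed} it yields an orientable quadrilateral embedding of $L_n^m$ in the strong case. In the non-strong case the pair $\{T_3,T_5\}$ that is compatible but never strongly compatible (Lemma~\ref{lem:K6}, preserved by the induction of Theorem~\ref{thm:non}) has the stronger property that none of $T_3^{\pm1},T_5^{\pm1}$ are pairwise strongly compatible; since $(T^m)^{-1}=(T^{-1})^m$ and transition multisets are only multiplied by $m$, the same holds for $\{T_3^m,T_5^m\}$, so $\E_n^m$ is not switching-equivalent to any strong embedding set and the corresponding quadrilateral embedding of $L_n^m$ is non-orientable. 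Either way the Euler-genus bound is attained, which proves both identities for even $n\ge 6$ and the orientable identity for $n=4$ (here starting the blow-up from the planar embedding set of $K_4^3$).

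\emph{The case $n=4$, non-orientable, and the main obstacle.} This is the only genuinely new point, since $L_4$ is planar and thus has no non-orientable quadrilateral embedding: the base of the induction must be given directly. Because $m(K_4-i)=mK_3$ has all degrees equal to $2m\ge4$, it admits \emph{non-periodic} Eulerian circuits, i.e.\ circuits with a ``doubling-back'' transition $a,v,a$, and one can assemble these into a compatible but non-strong embedding set of $mK_4^3$. For instance, for $m=2$ one checks that
\[
T_1=(3,2,3,4,2,4),\quad T_2=(3,1,3,4,1,4),\quad T_3=(1,2,4,1,2,4),\quad T_4=(1,2,3,1,2,3)
\]
are pairwise compatible, while the two transitions $2,3,4$ and $4,3,2$ through $3$ in $T_1$ cannot both be matched by the (direction-matched) pair of transitions through $1$ in $T_3$, even after reversing $T_1$ or $T_3$; hence $\{T_1,T_3\}$ is a ``bad'' pair and $\E_4^2$ is non-orientable. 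For larger $m$ one builds a base set the same way (e.g.\ inserting $m-2$ triangles into $T_1,T_2$ and $m-2$ into $T_3,T_4$), optionally combining with the blow-up. The delicate step, and the main obstacle, is exactly this: showing that in $mK_3$ there is a compatible family of Eulerian circuits provably \emph{not} switching-equivalent to a strong one --- equivalently, that the ``pinch''/doubling-back obstruction survives all $2^4$ reversals of the circuits. This is a finite but slightly fiddly verification, and it is the one place where the argument departs from the $m=1$ proof.
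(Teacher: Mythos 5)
Your proposal is correct and takes essentially the same route as the paper: the Euler-formula lower bound via Lemma~\ref{lem:hypergenus}, the multigraph version of Theorem~\ref{thm:embed}, and an upper bound obtained from the embedding sets of Theorems~\ref{thm:ori} and~\ref{thm:non} --- your $m$-fold repetition $T_i^m$ is exactly the branched covering (branching degree $m$ at each vertex of $X_n$) that the paper constructs inductively by splicing a copy of $T_i$ into $T_i^m$ at a common transition, and your compatible-but-never-strong set for $2K_4^3$ plays precisely the role of the paper's Klein-bottle base. The only point you leave sketchy, the non-orientable $n=4$ case for $m\ge3$, is treated just as tersely in the paper (one line reducing it to the same induction), so this is not a substantive departure or gap.
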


\begin{figure}[htb]
\centering
\includegraphics[width=2in]{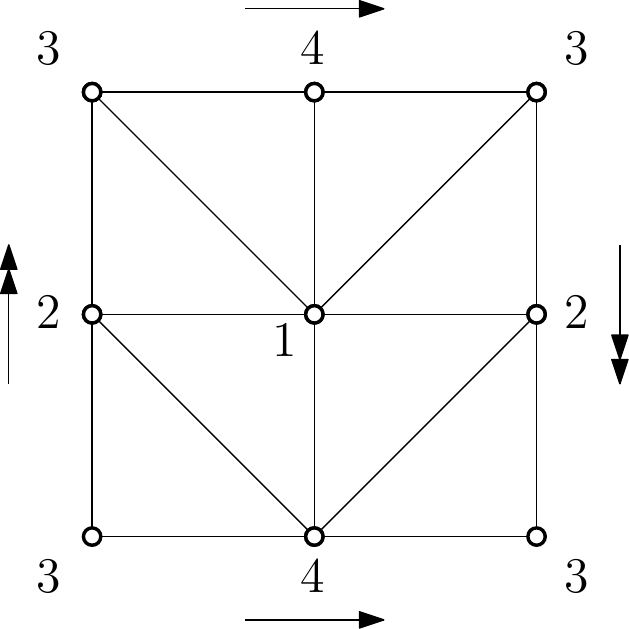}
\caption{A non-orientable embedding of $2K_4^3$ on the Klein bottle.}\label{fig:2K_4}
\end{figure}

\begin{proof}
The lower bound follows by Lemma \ref{lem:hypergenus}. To see the upper bound, we will give an inductive construction on $m$.

Suppose $\E_n^m=\{T_1^m,\dots,T_n^m\}$ is a (strong) embedding set of $m\K$, and suppose that $\E_n=\{T_1,\dots,T_n\}$ is a (strong) embedding set of $\K$. For every odd $i\in[n]$, suppose the transition $a_i,i+1,b_i$ is in both $T_i^m$ and $T_i$. Note that by our construction, such transition exists, and actually we have at least $\frac{n-2}{2}$ such transitions for every $i$. We arbitrarily pick one such transition $a_i,i+1,b_i$. Since $T_i$ also contains the transition $a_i,i+1,b_i$, we break $T_i$ between $i+1$ and $b_i$, and we write $T_i$ by starting with $b_i$ and end with $a_i,i+1$. We also break transition $a_i,i+1,b_i$ in $T_i^m$, and insert $T_i$ between $i+1$ and $b_i$. For the case $i+1$, we do the same things on transition $b_i,i,a_i$. Therefore, we will get a (strong) embedding set $\E_n^{m+1}$ of $(m+1)\K$. It is easy to verify the (strong) compatibility among trails in $\E_n^{m+1}$.

The described construction works in all cases except when $n=4$, and we look for the non-orientable embeddings of $mK_4^3$. Since $\g(K_4^3)=0$, the base case of induction for the non-orientable genus of $mK_4^3$ is when $m=2$. In this case, we construct the following non-orientable embedding set $\E_4^2$ on the Klein bottle (see Figure \ref{fig:2K_4} for the corresponding embedding):
\begin{align*}
  &T_1^2 : 3,2,4,2,3,4; \\
  &T_2^2 : 4,1,3,4,1,3; \\
  &T_3^2 : 1,4,2,1,4,2; \\
  &T_4^2 : 2,3,1,3,2,1.
\end{align*}
The induction step follows the same argument as when $n\geq6$.
\end{proof}

Let us observe that the embedding of $mK_n^3$ described in the proof of Theorem \ref{thm:even} (with the exception of the non-orientable case when $n=4$) is just a branched covering from a quadrilateral embedding of $K_n^3$ where each vertex $i\in X_n$ is a branch point with branching degree $m$.

\bibliographystyle{abbrv}
\bibliography{reference}

\end{document}